%%
%% Copyright 2007, 2008, 2009 Elsevier Ltd
%%
%% This file is part of the 'Elsarticle Bundle'.
%% ---------------------------------------------
%%
%% It may be distributed under the conditions of the LaTeX Project Public
%% License, either version 1.2 of this license or (at your option) any
%% later version.  The latest version of this license is in
%%    http://www.latex-project.org/lppl.txt
%% and version 1.2 or later is part of all distributions of LaTeX
%% version 1999/12/01 or later.
%%
%% The list of all files belonging to the 'Elsarticle Bundle' is
%% given in the file `manifest.txt'.
%%

%% Template article for Elsevier's document class `elsarticle'
%% with numbered style bibliographic references
%% SP 2008/03/01
%%
%%
%%
%% $Id: elsarticle-template-num.tex 4 2009-10-24 08:22:58Z rishi $
%%
%%
%\documentclass[final,3p,times]{elsarticle}
\documentclass[preprint,12pt]{elsarticle}
\usepackage{tikz}
\usepackage{amsthm,amsfonts,amssymb,amscd,amsmath,enumerate,verbatim,calc,graphicx,geometry}
\usepackage[all]{xy}
\newtheorem{theorem}{Theorem}[section]
\newtheorem{lemma}[theorem]{Lemma}
\newtheorem{proposition}[theorem]{Proposition}
\newtheorem{corollary}[theorem]{Corollary}
\theoremstyle{definition}
\theoremstyle{definitions}
\newtheorem{definition}[theorem]{Definition}

\newtheorem{remark}[theorem]{Remark}
\newtheorem{example}[theorem]{Example}
\theoremstyle{notations}

\theoremstyle{remarks}

\journal{}

\begin{document}

\begin{frontmatter}

%% Title, authors and addresses

%% use the tnoteref command within \title for footnotes;
%% use the tnotetext command for the associated footnote;
%% use the fnref command within \author or \address for footnotes;
%% use the fntext command for the associated footnote;
%% use the corref command within \author for corresponding author footnotes;
%% use the cortext command for the associated footnote;
%% use the ead command for the email address,
%% and the form \ead[url] for the home page:
%%
%% \title{Title\tnoteref{label1}}
%% \tnotetext[label1]{}
%% \author{Name\corref{cor1}\fnref{label2}}
%% \ead{email address}
%% \ead[url]{home page}
%% \fntext[label2]{}
%% \cortext[cor1]{}
%% \address{Address\fnref{label3}}
%% \fntext[label3]{}

\title{On Topologized Fundamental Groups with Small Loop Transfer Viewpoints}

%% use optional labels to link authors explicitly to addresses:
%% \author[label1,label2]{<author name>}
%% \address[label1]{<address>}
%% \address[label2]{<address>}

\author[]{N. Jamali}
\ead{no.jamali@stu.um.ac.ir}
\author[]{B. Mashayekhy\corref{cor1}}
\ead{bmashf@um.ac.ir}
\author[]{H. Torabi}
\ead{h.torabi@um.ac.ir}
\author[]{S.Z. Pashaei}
\ead{Pashaei.seyyedzeynal@stu.um.ac.ir}
\author[]{M. Abdullahi Rashid}
\ead{mbinev@stu.um.ac.ir }

\address{Department of Pure Mathematics, Center of Excellence in Analysis on Algebraic Structures, Ferdowsi University of Mashhad,\\
P.O.Box 1159-91775, Mashhad, Iran.}
\cortext[cor1]{Corresponding author}

\begin{abstract}
In this paper, by introducing some kind of small loop transfer spaces at a point, we study the behavior of topologized fundamental groups with the compact-open topology and the whisker topology, $\pi_{1}^{qtop}(X,x_{0})$ and $\pi_{1}^{wh}(X,x_{0})$, respectively. In particular, we give necessary or sufficient conditions for coincidence and being topological group of these two topologized fundamental groups. Finally, we give some examples to show that the reverse of some of these implications do not hold, in general.
\end{abstract}

\begin{keyword}
Small loop transfer space\sep quasitopological fundamental group\sep whisker topology\sep topological group.
%% keywords here, in the form: keyword \sep keyword
\MSC[2010]{57M10, 57M12, 57M05. }
%% MSC codes here, in the form: \MSC code \sep code
%% or \MSC[2008] code \sep code (2000 is the default)

\end{keyword}

\end{frontmatter}

%%
%% Start line numbering here if you want
%%
% \linenumbers

%% main text
%\\\\\\\\\\\\\\\\\\\\\\\\\\\\\\\\\\\\\\\\\\\\\\\\\\\\\\\\\\\\\\\\\\\\\\\\\\\\\\\\\\\\\\\\\\\\\\\\\\\\\\\\\\\\\\\\\\\\\\\\\\\\\\\\\\\\\\\\\
%=========================================================================================================================================
%/////////////////////////////////////////////////////////////////////////////////////////////////////////////////////////////////////////

\section{Introduction and Motivation}

Let $P(X)$ denote the space of paths in $X$ with the compact-open topology. The compact-open topology of $P(X)$ is generated by the subbasis sets
\begin{center}
$\langle K,U\rangle=\lbrace \alpha\in{P(X)} \ \vert \ \alpha(K)\subset U\rbrace,$
\end{center}
where $K\subset[0,1]$ is compact and $U\subset X$ is open. For given $x_0\in{X}$, let $P(X,x_{0})=\lbrace \alpha\in{P(X)}\ \vert \ \alpha(0)=x_{0}\rbrace$ denote the space of paths starting at $x_{0}$ and $\Omega(X,x_{0})=\lbrace \alpha\in{P(X)}\ \vert \ \alpha(0)=x_{0}=\alpha(1)\rbrace$ denote the space of loops based at $x_{0}$, as two well known subspaces of $P(X)$.

 We recall that the set  ${\widetilde{X}}$ is the quotient space of the path space $P(X,x_{0})$ which is defined as follows. Consider an equivalence relation on $P(X,x_{0})$ as  $\alpha_{1}\sim \alpha _{2}$ if and only if $\alpha_{1}(1)=\alpha_{2}(1)$ and $ \alpha_{1}\ast\alpha_{2}^{-1} $ is nullhomotopic. The equivalence class for a path $\alpha$ is denoted by  $\langle\alpha\rangle$. Note that the sets $(U,\langle\alpha\rangle):=\lbrace \langle\beta \rangle \in{\widetilde{X}} \     \vert \  \beta \simeq\alpha \ast \varepsilon   , \varepsilon:(I,0)\rightarrow (U,\alpha(1))\rbrace$  form a basis for a topology on $\widetilde{X}$, where $U$ is an open neighborhood of $\alpha(1)$ in $X$. This topology on $\widetilde{X}$ is introduced by Spanier \cite{Spanier} and named whisker topology by Brodskiy et al. \cite{BroU} which is denoted by  $\widetilde{X}^{wh}$. The pointed map $p:(\widetilde{X}^{wh}, \langle c_{x_{0}}\rangle )\rightarrow (X,x_{0})$ defined by $p(\langle\alpha\rangle)=\alpha(1)$ is continuous. Moreover, if $X$ is path connected, then $p$ is surjective (for more details see \cite[page 82]{Spanier}).

For any pointed topological space $(X,x_{0})$ the whisker topology on $\widetilde{X}$ induces a topology on $ p^{-1}(x_{0})$ and let $ (p^{-1}(x_{0}))^{wh}$ denote the induced topology on $ p^{-1}(x_{0})$. Clearly, the function    $f:\pi_{1}(X,x_{0})\rightarrow p^{-1}(x_{0})$ defined by $[\alpha] \mapsto \langle\alpha\rangle$ is a bijection
 which induces a topology on $\pi_{1}(X,x_{0})$ via $ (p^{-1}(x_{0}))^{wh}$.  The fundamental group equipped with this topology is denoted by ${{\pi }_1}^{wh}(X,x_0)$ (see \cite{BroU}). On the other hand, the classical way of introducing a topology on the universal covering space $\widetilde{X}$ is as the quotient space of the space of based paths $P(X,x_{0})$ equipped with the compact-open topology. The space $\widetilde{X}$ equipped with this topology is denoted by $\widetilde{X}^{top}$.  This topology on $\widetilde{X}$ induces a quotient topology on the fundamental group $\pi_{1}(X,x_{0})$ which is denoted by $\pi_{1}^{qtop}(X,x_{0})$. It is natural to ask whether these topologies make the fundamental group a topological group, i.e., whether the operations of multiplication and inversion are continuous in the given topologies. It should be noted that these two topologies fail to make $\pi_{1}(X,x_{0})$ a topological group for different reasons. Some research works focus on the properties of the topological space $X$ which make $\pi_{1}^{qtop}(X,x_{0}) $ a topological group (see \cite{BrazTG,TorabiP}). Recently, Brodskiy et al. \cite[Proposition 4.20]{BroU} showed that the continuity of taking inverse in $\pi_{1}^{wh}(X,x_{0})$ would make it a topological group.

The concept of small loop transfer space (SLT for abbreviation) is introduced for the first time by Brodskiy et al. in \cite[Definition 4.7]{BroU}. In \cite[Theorem 4.12]{BroU}, they proved that the spaces $\widetilde{X}^{top}$ and $\widetilde{X}^{wh}$ coincide if and only if $X$ is an SLT space. Consequently, one can conclude that the coincidence
$\pi_{1}^{top}(X,x_{0})= \pi_{1}^{wh}(X,x_{0})$ for an SLT space $X$. On the other hand, it is known that $\pi_{1}^{qtop}(X,x_{0})$  has naturally the continuity of the inversion \cite[Lemma 2]{BrazFa}. Hence $\pi_{1}^{qtop}(X,x_{0})$ is a topological group when $X$ is an SLT space.
Inspiring SLT spaces and in order to introduce some kind of small loop transfer spaces at a point, we define the concept of a small loop transfer path as follows.
\begin{definition} \label{Def1}
 A path $\alpha:I\rightarrow X$ is called a small loop transfer path (SLT path  for abbreviation) if
  for every open neighborhood U of $\alpha(0)$ in $X$, there exists an open neighborhood $V$ of $\alpha(1)$ in $X$ such that for a given loop $\gamma:(I,0)\rightarrow (V,\alpha(1))$ there is a loop $\gamma^{'}:(I,0)\rightarrow (U,\alpha(0))$ which is homotopic to $\alpha\ast\gamma\ast\alpha^{-1}$ rel $\dot{I}$. The space $X$ is called SLT space if all paths are SLT path. We call a loop $\alpha$ an SLT loop if $\alpha$ has the property of an SLT path.
\end{definition}

In this paper, we are going to investigate on the relationship between the topological group property of $\pi_{1}^{wh}(X,x_{0})$ and $\pi_{1}^{qtop}(X,x_{0})$ and SLT spaces at one point which are introduced in \cite{Pasha}. In Section 2, first, we address to the relationship between path Spanier groups, introduced in \cite{Torabi}, and SLT spaces at a point. Using this relationship, we give a new proof to show that $ \pi_{1}^{wh}(X,x_{0})=\pi_{1}^{qtop}(X,x_{0}) $ if and only if $X$ is SLT at $x_0$ (see also \cite{Pasha}).  Second, we introduce the notion of small loop transfer loop at one point (SLTL for short) and show that the property of being SLTL at $x_{0}$ for $X$ is a necessary and sufficient condition for $\pi_{1}^{wh}(X,x_{0})$ to be a topological group.

 In the last section, we prove that if  $X$ is an SLT space at $x_{0}$,  then $ \pi_{1}^{s}(X,x_{0})=\pi_{1}^{sg}(X,x_{0}) $. Moreover, we introduce the notion of small ``small loop" transfer (SSLT for short) at one point and show that the equality  $ \pi_{1}^{s}(X,x_{0})=\pi_{1}^{sg}(X,x_{0}) $ is a necessary and sufficient condition  for the space $X$ to be SSLT at $x_{0}$. Also, we show that  if  $X$ is an SLT at $x_{0}$, then $ \pi_{1}^{s}(X,x_{0}) $  is an open subgroup of $\pi_{1}^{wh}(X,x_{0})$ if and only if  $X$ is a semilocally small generated space. Finally, we gather all main results of the paper together in a cubic diagram of implications and give some examples to show that the reverse of some of them do not hold in general.

\section{On the whisker topology and SLTL spaces}

Virk and Zastrow showed that  $\pi_{1}^{wh}(X,x_{0})$ is finer than  $\pi_{1}^{qtop}(X,x_{0})$ but the converse does not hold in general (see \cite[Proposition 7]{VirkZ}). In this section, we are going to study the relationship between small loop transfer spaces at one point and these two topologies. The concept of small loop transfer space at one point was introduced in \cite{Pasha} as follows.

\begin{definition}\label{Def2}
A topological space $X$ is called a small loop transfer space at $x_{0}$  (SLT space at $x_{0}$ for abbreviation) if any path  $\alpha $  starting at $x_{0}$  in $X$ is an SLT path. Note that $X$ is a small loop transfer space if it is SLT at $x$ for any $x\in X$.
\end{definition}
%%%%%%%%%%%%%%%%%%%%%%%%%%%%%%%%%%%%%%%%%%%%%%%
It is easy to see that the property of being SLT with respect to one point is weaker than the property of being SLT. For example, \textit{Harmonic Archipelago}, $HA$, introduced by Bogley and Seiradsky \cite{Bog}, is not an SLT space (consider the path $\alpha$ from any semilocally simply connected point to the common point of the boundary circles) but it is an SLT space at the non-semilocally simply connected point $0$.

Small loop transfer spaces at a point  are related to path Spanier groups. We present this relation in the following proposition. The following is the definition of a path Spanier group for a locally path connected space as presented in \cite{Torabi}.

\begin{definition} \label{Def3}
Let $(X, x_{0})$ be a locally path connected space and let $\mathcal{V}=\lbrace V_{\alpha} \vert \alpha\in{P(X,x_{0})}\rbrace$ be a path open cover of $X$ by the neighborhoods $V_{\alpha}$ containing $\alpha(1)$. Define  $\widetilde{\pi }(\mathcal{V},x_{0})$ as the subgroup of $\pi_{1}(X,x_{0})$ consisting of  the
homotopy classes of loops that can be represented by a product (concatenation) of the following type
$$\prod_{j=1}^{n}\alpha_{j}\beta_{j}\alpha^{-1}_{j},$$
where $\alpha_{j}$'s are arbitrary path starting at $x_{0}$ and each $\beta_{j}$ is a loop inside of the open set $V_{\alpha_{j}}$ for all $j\in{\lbrace1,2,...,n\rbrace}$.
We call $\widetilde{\pi }(\mathcal{V},x_{0})$ the  path Spanier group of $\pi_{1}(X,x_{0})$ with respect to $\mathcal{V}$.
\end{definition}

\begin{proposition} \label{Pro1}
A topological space $X$ is SLT at $x_{0}$  if and only if for every open neighborhood  $U\subseteq X$ containing $x_{0}$ there is a path open cover $\mathcal{V}$ of $X$ at $x_{0}$ such that $\widetilde{\pi }(\mathcal{V},x_{0})\leq i_{\ast}\pi_{1}(U,x_{0}) $.
\end{proposition}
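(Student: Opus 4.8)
The plan is to prove both implications by unwinding the definitions of an SLT path (Definition~\ref{Def1}) and of the path Spanier group $\widetilde{\pi}(\mathcal{V},x_{0})$ (Definition~\ref{Def3}); the only homotopy-theoretic input needed is that homotopy rel $\dot{I}$ is compatible with concatenation of paths.

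For the forward implication, I would suppose $X$ is SLT at $x_{0}$ and fix an open neighborhood $U$ of $x_{0}$. For each $\alpha\in P(X,x_{0})$, applying the SLT-path property of $\alpha$ to the neighborhood $U$ of $\alpha(0)=x_{0}$ yields an open neighborhood $V_{\alpha}$ of $\alpha(1)$ with the transfer property of Definition~\ref{Def1}; this produces a path open cover $\mathcal{V}=\{V_{\alpha}\mid\alpha\in P(X,x_{0})\}$ of $X$ at $x_{0}$ (depending on $U$). I then claim $\widetilde{\pi}(\mathcal{V},x_{0})\leq i_{\ast}\pi_{1}(U,x_{0})$. Indeed, a typical generator of $\widetilde{\pi}(\mathcal{V},x_{0})$ is $[\prod_{j=1}^{n}\alpha_{j}\beta_{j}\alpha_{j}^{-1}]$ with each $\beta_{j}$ a loop in $V_{\alpha_{j}}$ based at $\alpha_{j}(1)$; for each $j$ the choice of $V_{\alpha_{j}}$ furnishes a loop $\beta_{j}'$ in $U$ based at $x_{0}$ with $\beta_{j}'\simeq\alpha_{j}\ast\beta_{j}\ast\alpha_{j}^{-1}$ rel $\dot{I}$. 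Concatenating, $\prod_{j=1}^{n}\alpha_{j}\beta_{j}\alpha_{j}^{-1}\simeq\prod_{j=1}^{n}\beta_{j}'$ rel $\dot{I}$, and $\prod_{j=1}^{n}\beta_{j}'$ is a loop lying entirely in $U$; hence this generator lies in $i_{\ast}\pi_{1}(U,x_{0})$, and therefore so does the whole subgroup $\widetilde{\pi}(\mathcal{V},x_{0})$.

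For the converse, I would assume the stated condition and let a path $\alpha\in P(X,x_{0})$ and an open neighborhood $U$ of $\alpha(0)=x_{0}$ be given. By hypothesis there is a path open cover $\mathcal{V}=\{V_{\beta}\mid\beta\in P(X,x_{0})\}$ of $X$ at $x_{0}$ with $\widetilde{\pi}(\mathcal{V},x_{0})\leq i_{\ast}\pi_{1}(U,x_{0})$; set $V:=V_{\alpha}$, an open neighborhood of $\alpha(1)$. For any loop $\gamma:(I,0)\to(V,\alpha(1))$, the class $[\alpha\ast\gamma\ast\alpha^{-1}]$ is a one-factor element of the form in Definition~\ref{Def3} (take $n=1$, $\alpha_{1}=\alpha$, $\beta_{1}=\gamma$), so $[\alpha\ast\gamma\ast\alpha^{-1}]\in\widetilde{\pi}(\mathcal{V},x_{0})\leq i_{\ast}\pi_{1}(U,x_{0})$. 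Thus there is a loop $\gamma':(I,0)\to(U,x_{0})$ with $[\gamma']=[\alpha\ast\gamma\ast\alpha^{-1}]$, i.e. $\gamma'\simeq\alpha\ast\gamma\ast\alpha^{-1}$ rel $\dot{I}$; this is exactly the SLT-path condition for $\alpha$, so $X$ is SLT at $x_{0}$.

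I do not expect a genuine obstacle here, since the statement is essentially a reformulation, but two points deserve attention. First, in both directions the cover $\mathcal{V}$ is permitted to depend on $U$, and in the forward direction one must make the choices $\alpha\mapsto V_{\alpha}$ simultaneously over all of $P(X,x_{0})$ (an appeal to the axiom of choice). Second, the bookkeeping that a concatenation of $x_{0}$-based loops each contained in $U$ is again an $x_{0}$-based loop contained in $U$, together with the compatibility of homotopy rel $\dot{I}$ with concatenation, is what legitimizes replacing $\prod_{j}\alpha_{j}\beta_{j}\alpha_{j}^{-1}$ by $\prod_{j}\beta_{j}'$ in the first part; this is the only place a little care is required.
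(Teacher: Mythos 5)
Your proposal is correct and follows essentially the same route as the paper: in the forward direction choose $V_{\alpha}$ from the SLT property of each $\alpha$ for the fixed $U$ and check the generators (the paper simply notes each conjugate generator lies in the subgroup $i_{\ast}\pi_{1}(U,x_{0})$, which subsumes your explicit concatenation bookkeeping), and in the converse direction specialize the Spanier-group condition to the single factor $\alpha\ast\gamma\ast\alpha^{-1}$. No substantive difference.
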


\begin{proof}
Let $U$ be an open neighborhood of $x_{0}$. Since $X$ is SLT at $x_{0}$ , for every path $\alpha$
from $x_{0}$ to $\alpha(1)$ there is an open neighborhood $V_{\alpha}$ of $\alpha(1)$ such that for every loop $\beta$ in $V_{\alpha}$  based at $\alpha(1)$ we have  $[\alpha\ast\beta\ast\alpha^{-1}] \in i_{\ast}\pi_{1}(U,x_{0})$, where $ i_{\ast}: \pi_{1}(U,x_{0})\rightarrow \pi_{1}(X,x_{0})$ is the homomorphism induced by the inclusion map $i: U \rightarrow X$. Consider $\mathcal{V}=\lbrace V_{\alpha} \ \vert \ \alpha\in{P(X,x_{0})}\rbrace$. Hence every generator of $\widetilde{\pi }(\mathcal{V},x_{0})$ belongs to
$ i_{\ast}\pi_{1}(U,x_{0})$  which implies that $\widetilde{\pi }(\mathcal{V},x_{0})\leq i_{\ast}\pi_{1}(U,x_{0}) $.

Conversely, let $\alpha$ be a path from $x_{0}$ to $\alpha(1)$ and $U$ be an open neighborhood containing $x_{0}$. By the definition of the  path Spanier group, there is a $V_{\alpha} \in \mathcal{V}$ such that $[\alpha\ast\beta\ast\alpha^{-1}] \in \widetilde{\pi }(\mathcal{V},x_{0})$ for every loop $\beta$ in $V_{\alpha}$ based at $\alpha(1)$. Thus, by assumption, $[\alpha\ast\beta\ast\alpha^{-1}] \in i_{\ast}\pi_{1}(U,x_{0})$ which implies that $\alpha$ is an SLT path. Hence $X$ is an SLT space at $x_{0}$.
\end{proof}
%%%%%%%%%%%%%%%%%%%%%%%%%%%%%%%%%%%%%%%%%%%%%%%%
Recall from \cite[Corollary 3.3]{Torabi} that a subgroup $H$ of $ \pi_{1}^{qtop}(X,x_{0})  $ is an open subgroup if only if there exists a path open cover $\mathcal{V}$  of $X$ at $x_{0}$ such that
 $\widetilde{\pi }(\mathcal{V},x_{0})\leq H $.  Combining this fact and Proposition \ref{Pro1}, we obtain the following corollary.

 %%%%%%%%%%%%%%%%%%%%%%%%%%%%%%%%%%%%%%%%%%%
\begin{corollary} \label{Co1}
A topological space $X$ is SLT at $x_{0}$ if and only if  for every open neighborhood  $U\subseteq X$ containing $x_{0}$, $ i_{\ast}\pi_{1}(U,x_{0}) $ is an open subgroup of $ \pi_{1}^{qtop}(X,x_{0})  $.
\end{corollary}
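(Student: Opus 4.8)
The plan is to derive Corollary \ref{Co1} by simply combining Proposition \ref{Pro1} with the characterization of open subgroups of $\pi_{1}^{qtop}(X,x_{0})$ recalled from \cite[Corollary 3.3]{Torabi}. The two statements are, in a sense, tailor-made to be linked: Proposition \ref{Pro1} says that $X$ being SLT at $x_{0}$ is equivalent to the existence, for each open $U\ni x_{0}$, of a path open cover $\mathcal{V}$ of $X$ at $x_{0}$ with $\widetilde{\pi}(\mathcal{V},x_{0})\le i_{\ast}\pi_{1}(U,x_{0})$, while \cite[Corollary 3.3]{Torabi} says that a subgroup $H\le\pi_{1}^{qtop}(X,x_{0})$ is open if and only if it contains $\widetilde{\pi}(\mathcal{V},x_{0})$ for some such $\mathcal{V}$. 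So the only real content is to recognize that ``there exists $\mathcal{V}$ with $\widetilde{\pi}(\mathcal{V},x_{0})\le i_{\ast}\pi_{1}(U,x_{0})$'' is, by the second result applied with $H=i_{\ast}\pi_{1}(U,x_{0})$, precisely the statement ``$i_{\ast}\pi_{1}(U,x_{0})$ is open in $\pi_{1}^{qtop}(X,x_{0})$.''

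Concretely, I would argue as follows. Suppose $X$ is SLT at $x_{0}$, and let $U$ be an arbitrary open neighborhood of $x_{0}$. By Proposition \ref{Pro1} there is a path open cover $\mathcal{V}$ of $X$ at $x_{0}$ with $\widetilde{\pi}(\mathcal{V},x_{0})\le i_{\ast}\pi_{1}(U,x_{0})$; applying \cite[Corollary 3.3]{Torabi} with $H=i_{\ast}\pi_{1}(U,x_{0})$ shows that $i_{\ast}\pi_{1}(U,x_{0})$ is an open subgroup of $\pi_{1}^{qtop}(X,x_{0})$. Conversely, suppose that for every open $U\ni x_{0}$ the subgroup $i_{\ast}\pi_{1}(U,x_{0})$ is open in $\pi_{1}^{qtop}(X,x_{0})$. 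Fix such a $U$; by \cite[Corollary 3.3]{Torabi} there is a path open cover $\mathcal{V}$ of $X$ at $x_{0}$ with $\widetilde{\pi}(\mathcal{V},x_{0})\le i_{\ast}\pi_{1}(U,x_{0})$, and since this holds for every $U$, Proposition \ref{Pro1} yields that $X$ is SLT at $x_{0}$.

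The one point that deserves a sentence of care—and it is the only candidate for an ``obstacle''—is the matching of hypotheses between the two cited results. Definition \ref{Def3} and hence Proposition \ref{Pro1} are stated for locally path connected spaces, and \cite[Corollary 3.3]{Torabi} likewise concerns path open covers in that setting; so I would either state Corollary \ref{Co1} under the standing assumption that $X$ is locally path connected, or remark that the notion of path open cover and the path Spanier group are only being used in that context. There is no genuine difficulty beyond bookkeeping: the proof is a two-line ``if and only if'' chain through the equivalences, and I expect the write-up to be essentially the two paragraphs above, perhaps compressed into a single short paragraph since no new construction is required.
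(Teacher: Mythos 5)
Your proposal is correct and is exactly the paper's argument: the corollary is obtained by combining Proposition \ref{Pro1} with the characterization of open subgroups of $\pi_{1}^{qtop}(X,x_{0})$ via path Spanier groups from \cite[Corollary 3.3]{Torabi}, applied with $H=i_{\ast}\pi_{1}(U,x_{0})$. Your remark about the implicit locally path connected hypothesis is a fair bookkeeping point but does not change the argument.
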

%%%%%%%%%%%%%%%%%%%%%%%%%%%%%%%%%%%%%%%%%%%%

Note that Virk  and Zastrow showed that there is a locally path connected space $X$ for which $\pi_{1}^{wh}(X,x_{0})$ and $\pi_{1}^{qtop}(X,x_{0})$ do not coincide (see \cite[Proposition 7]{VirkZ}). In \cite[Theorem 4.12]{BroU} Brodskiy et al. showed that this coincidence holds at all point $x\in{X}$ when $X$ is a locally path connected SLT space. Moreover, Pashaei et al. \cite{Pasha} showed that the property of being SLT at $x_{0}$ for the space $X$ is a necessary and sufficient condition for the coincidence  $\pi_{1}^{wh}(X,x_{0})=\pi_{1}^{qtop}(X,x_{0})$. In the following theorem, using path Spanier groups, we give another proof for this statement.

\begin{theorem} \label{Th1}
Let $X$ be a connected locally path connected space, then $X$ is SLT at $ x_{0} $ if and only if  $ \pi_{1}^{wh}(X,x_{0})=\pi_{1}^{qtop}(X,x_{0}) $.
\end{theorem}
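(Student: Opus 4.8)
The plan is to prove the two implications separately, using the bridge between SLT at $x_0$ and path Spanier groups (Proposition \ref{Pro1}) together with the characterization of open subgroups of $\pi_1^{qtop}$ via path Spanier groups (Corollary \ref{Co1}), and a parallel fact about basic neighborhoods in the whisker topology. Recall first that by \cite[Proposition 7]{VirkZ} the identity map $\pi_1^{wh}(X,x_0)\to\pi_1^{qtop}(X,x_0)$ is always continuous, so the two topologies coincide precisely when every $\pi_1^{qtop}$-open set is $\pi_1^{wh}$-open; equivalently, since both are left-homogeneous under multiplication, when every $\pi_1^{qtop}$-open neighborhood of the identity $[c_{x_0}]$ contains a $\pi_1^{wh}$-open neighborhood of the identity.

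For the forward direction, suppose $X$ is SLT at $x_0$. I would take a basic $\pi_1^{wh}$-open neighborhood of the identity: unwinding the basis $(U,\langle\alpha\rangle)$ of $\widetilde X^{wh}$ at the constant path gives the sets $i_*\pi_1(U,x_0)$ for $U$ an open neighborhood of $x_0$, so these form a neighborhood basis at the identity of $\pi_1^{wh}(X,x_0)$. Conversely a basic $\pi_1^{qtop}$-open neighborhood of the identity is, by \cite[Corollary 3.3]{Torabi}, one containing some $\widetilde\pi(\mathcal V,x_0)$. So I must show: (a) each $i_*\pi_1(U,x_0)$ is $\pi_1^{qtop}$-open, and (b) each $\widetilde\pi(\mathcal V,x_0)$ is $\pi_1^{wh}$-open. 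Part (a) is exactly Corollary \ref{Co1}. Part (b): given a path open cover $\mathcal V$, by local path connectedness refine to $x_0$ having a connected open neighborhood $U_0$ contained in $V_{c_{x_0}}$; then $i_*\pi_1(U_0,x_0)\le \widetilde\pi(\mathcal V,x_0)$ because loops in $U_0$ are already generators (take $\alpha=c_{x_0}$), and $i_*\pi_1(U_0,x_0)$ is a $\pi_1^{wh}$-basic neighborhood of the identity. Hence both topologies have the same neighborhood filter at the identity, and by homogeneity they coincide.

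For the converse, suppose $\pi_1^{wh}(X,x_0)=\pi_1^{qtop}(X,x_0)$. Fix an open neighborhood $U$ of $x_0$; shrinking $U$ if necessary, assume $U$ is path connected. Then $i_*\pi_1(U,x_0)$ is open in $\pi_1^{wh}(X,x_0)$ (it is a basic whisker neighborhood of the identity), hence open in $\pi_1^{qtop}(X,x_0)$ by hypothesis. By Corollary \ref{Co1} applied in reverse — more precisely by the ``if'' half of that corollary, which says that if $i_*\pi_1(U,x_0)$ is $\pi_1^{qtop}$-open for every open $U\ni x_0$ then $X$ is SLT at $x_0$ — we conclude $X$ is SLT at $x_0$. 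One must be mildly careful that Corollary \ref{Co1} is stated as a biconditional over \emph{all} open $U$, so I would run the argument for an arbitrary $U$ and invoke the corollary once at the end.

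The main obstacle I anticipate is the precise bookkeeping in step (b): identifying the whisker-topology basic neighborhoods of the identity in $\pi_1(X,x_0)$ with the subgroups $i_*\pi_1(U,x_0)$, and ensuring the local path connectedness is used correctly to pass from an arbitrary element $V_{c_{x_0}}$ of a path open cover to a genuinely open (and, as needed, path connected) neighborhood of $x_0$ inside it. Everything else reduces to quoting Proposition \ref{Pro1}, Corollary \ref{Co1}, \cite[Corollary 3.3]{Torabi}, and the Virk--Zastrow comparison \cite[Proposition 7]{VirkZ}; the homogeneity argument reducing ``coincidence of topologies'' to ``coincidence of neighborhood filters at the identity'' is routine once one notes left translation by a fixed homotopy class is a homeomorphism in both topologies.
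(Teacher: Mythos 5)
Your core argument is the paper's own: the whisker topology has basis $\{[\alpha]\,i_{*}\pi_{1}(U,x_{0})\}$, SLT at $x_{0}$ makes each $i_{*}\pi_{1}(U,x_{0})$ open in $\pi_{1}^{qtop}(X,x_{0})$ via Proposition \ref{Pro1}/Corollary \ref{Co1}, homogeneity (quasitopological group structure) transports this to all translates, and the converse reads the hypothesis back through Corollary \ref{Co1}; together with the always-valid comparison of Fischer--Zastrow/Virk--Zastrow this is exactly the proof in the paper, and your converse (including the remark that shrinking $U$ is unnecessary, since $i_{*}\pi_{1}(U,x_{0})$ is whisker-basic for \emph{every} open $U\ni x_{0}$ and the SLT condition only needs to be checked on a neighborhood basis) is fine.

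One step, however, is wrong as stated: you claim that, by \cite[Corollary 3.3]{Torabi}, a basic $\pi_{1}^{qtop}$-open neighborhood of the identity is one containing some $\widetilde{\pi}(\mathcal V,x_{0})$. That corollary characterizes open \emph{subgroups} of $\pi_{1}^{qtop}(X,x_{0})$, not arbitrary open neighborhoods of the identity; it is false in general that every $\pi_{1}^{qtop}$-neighborhood of $[c_{x_{0}}]$ contains a path Spanier group. Indeed, if it did, the identity would have a neighborhood basis of open subgroups in a quasitopological group, which forces continuity of multiplication, so $\pi_{1}^{qtop}$ would be a topological group for every Peano space --- contradicting Fabel's result for the Hawaiian earring \cite{FabelM}. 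Fortunately this claim (and your step (b)) is only used for the inclusion ``$\pi_{1}^{qtop}$-open $\Rightarrow$ $\pi_{1}^{wh}$-open,'' which is the automatic direction you already quoted from \cite{VirkZ} (and which the paper takes from \cite[Lemma 2.1]{Zastrow}); relatedly, your reformulation of coincidence has the roles reversed --- the nontrivial condition is that every $\pi_{1}^{wh}$-neighborhood of the identity be a $\pi_{1}^{qtop}$-neighborhood, not the other way around. Deleting step (b) and the appeal to \cite[Corollary 3.3]{Torabi} for neighborhoods, and keeping only step (a) plus homogeneity plus the cited comparison, yields a correct proof identical in substance to the paper's.
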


\begin{proof}
 Fischer and Zastrow \cite[ Lemma 2.1]{Zastrow}  showed that  $\pi_{1}^{qtop}(X,x_{0})$ is coarser than $\pi_{1}^{wh}(X,x_{0})$. Let $X$ be SLT at $ x_{0} $. It is enough to show that $\pi_{1}^{wh}(X,x_{0})$ is coarser than $\pi_{1}^{qtop}(X,x_{0})$.  In \cite[Lemma 3.1]{Ab}, it has been shown that the collection $ \lbrace [\alpha ] i_{\ast}\pi_{1}(U,x_{0}) \  \vert \ [\alpha ]\in \pi_{1}(X,x_{0}) \ \rbrace $
forms a basis for the whisker topology on $\pi_{1}(X,x_{0})$. Thus, it suffices to prove that $[\alpha ] i_{\ast}\pi_{1}(U,x_{0})$ is an open subset of $\pi_{1}^{qtop}(X,x_{0}) $, where $U$ is an open neighborhood of $x_{0}$. Using Proposition \ref{Pro1},  there is a path open cover $\mathcal{V}$ of $X$  such that  $\widetilde{\pi }(\mathcal{V},x_{0})\leq i_{\ast}\pi_{1}(U,x_{0}) $.
Since  $\widetilde{\pi }(\mathcal{V},x_{0})$ is open in $\pi_{1}^{qtop}(X,x_{0}) $ \cite[Theorem 3.2]{Torabi} and $\pi_{1}^{qtop}(X,x_{0}) $ is a quasitopological group,  we imply that  $[\alpha ] i_{\ast}\pi_{1}(U,x_{0})$ is an open subset of $\pi_{1}^{qtop}(X,x_{0}) $.

Conversely,  suppose  $ \pi_{1}^{wh}(X,x_{0})=\pi_{1}^{qtop}(X,x_{0}) $. The subset
 $ i_{\ast}\pi_{1}(U,x_{0})$ is an open basis  in $ \pi_{1}^{wh}(X,x_{0})$. Therefore, by the above coincidence,  the subset $ i_{\ast}\pi_{1}(U,x_{0})$ is open in $\pi_{1}^{qtop}(X,x_{0}) $. Hence Corollary \ref{Co1} implies that $X$ is SLT at $ x_{0} $.
\end{proof}
%%%%%%%%%%%%%%%%%%%%%%%%%%%%%%%%%%%%%%%%%%%%
There are several examples which illustrate that $\pi_{1}^{qtop}(X,x_{0})$ does not need to be a topological group even if $X$ is a compact metric space (see \cite{FabelM,FabelC}). Calcut and McCarthy \cite{CalMc} proved that the quotient topology induced by the compact-open topology of the fundamental group of a locally path connected and semilocally simply connected space is discrete which implies that $\pi_{1}^{qtop}(X,x_{0})$ is a topological group. Brazas \cite{BrazTG} introduced a new topology on fundamental groups made them topological groups. Torabi et al. \cite{TorabiP} showed that $\pi_{1}^{qtop}(X,x_{0})$ is a topological group when $X$ is a locally path connected, semilocally small generated space. Note that since  the operation of  inversion in $\pi_{1}^{qtop}(X,x_{0})$  is continuous, the equality $ \pi_{1}^{wh}(X,x_{0})=\pi_{1}^{qtop}(X,x_{0}) $ implies that the inverse map in $ \pi_{1}^{wh}(X,x_{0})$ is continuous. Therefore, by \cite[Proposition 4.20]{BroU}, $ \pi_{1}^{wh}(X,x_{0})$ is a topological group and accordingly, so is $ \pi_{1}^{qtop}(X,x_{0})$.  Thus, by Theorem \ref{Th1}, we can conclude the following corollary.
%%%%%%%%%%%%%%%%%%%%%%%%%%%%%%%%%%%%%%%%%%%%%%%
\begin{corollary} \label{Co2}
For a connected locally path connected space $X$, if $X$ is SLT at $ x_{0} $, then $ \pi_{1}^{qtop}(X,x_{0})  $ and  $ \pi_{1}^{wh}(X,x_{0})$ are topological groups.
\end{corollary}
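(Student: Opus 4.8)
The plan is to assemble the corollary entirely from results already in hand, with almost no new work: Theorem~\ref{Th1}, the cited continuity of inversion in $\pi_1^{qtop}$, and Brodskiy et al.'s criterion \cite[Proposition 4.20]{BroU}. Concretely, assume $X$ is a connected, locally path connected space that is SLT at $x_0$. By Theorem~\ref{Th1} this gives the coincidence of topologies $\pi_1^{wh}(X,x_0)=\pi_1^{qtop}(X,x_0)$ as topological spaces (and the group operations are of course the same underlying set maps). So it suffices to prove that one of these — equivalently both — is a topological group.

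The key observation is that $\pi_1^{qtop}(X,x_0)$ always has continuous inversion: the inversion map is induced on the quotient by the path-reversal map $\alpha\mapsto\alpha^{-1}$ on $P(X,x_0)$, which is a homeomorphism in the compact-open topology, so the induced map on the quotient $\pi_1^{qtop}$ is continuous (this is \cite[Lemma 2]{BrazFa}, already recalled in the text). Transporting this continuity across the homeomorphism $\pi_1^{wh}(X,x_0)=\pi_1^{qtop}(X,x_0)$, we get that inversion in $\pi_1^{wh}(X,x_0)$ is continuous. Now invoke \cite[Proposition 4.20]{BroU}: if inversion in $\pi_1^{wh}(X,x_0)$ is continuous, then $\pi_1^{wh}(X,x_0)$ is a topological group (i.e.\ multiplication is automatically continuous as well — this is the nontrivial input, specific to the whisker topology). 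Hence $\pi_1^{wh}(X,x_0)$ is a topological group, and by the coincidence of topologies so is $\pi_1^{qtop}(X,x_0)$.

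I would organize the written proof as three short sentences: (1) apply Theorem~\ref{Th1} to get the equality of topologies; (2) note inversion is continuous in $\pi_1^{qtop}$ hence, by the equality, in $\pi_1^{wh}$; (3) apply \cite[Proposition 4.20]{BroU} to conclude $\pi_1^{wh}(X,x_0)$ is a topological group, and transport back to $\pi_1^{qtop}(X,x_0)$. There is essentially no obstacle here — the corollary is a packaging of the cited facts, and indeed the paragraph immediately preceding the statement already spells out this chain of reasoning almost verbatim; the ``hard part'' (that continuity of inversion forces continuity of multiplication in the whisker topology) is entirely outsourced to \cite[Proposition 4.20]{BroU}. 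The only point worth a moment's care is making explicit that a homeomorphism of topological spaces which is also a group isomorphism transports the topological-group property, so that the conclusion genuinely lands on both $\pi_1^{qtop}$ and $\pi_1^{wh}$.
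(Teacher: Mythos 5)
Your argument is correct and is exactly the paper's reasoning: the text immediately preceding the corollary derives it by applying Theorem \ref{Th1}, transferring the continuity of inversion from $\pi_1^{qtop}(X,x_0)$ to $\pi_1^{wh}(X,x_0)$, and then invoking \cite[Proposition 4.20]{BroU}. Nothing is missing; your only addition is the (harmless) explicit remark about transporting the topological-group property across the identification.
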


 Note that Corollary \ref{Co2} does not hold for an SLT space at $x_{0}$ when $X$ is not locally path connected (see \cite{Pasha}). It seems interesting to find a necessary and sufficient condition on $X$ to make $\pi_{1}^{wh}(X,x_{0})$ a topological group.

\begin{definition} \label{Def4}
A topological space $X$ is called a small loop transfer space at $ x_{0} $ with respect to loops (SLTL space at $ x_{0} $ for short) if every loop $\alpha $ starting at $ x_{0} $ is an SLT loop.
\end{definition}
Note that there is an SLTL space at $x_{0}$  which is not SLT at $x_{0}$. As an example, consider the \textit{Hawaiian Earring}, $HE$, with any semilocally simply connected point (see \cite[Proposition 4.10]{BroU}).

In the following proposition, we show that  $X$ is SLTL space at $ x_{0} $  if and only if the fiber $ (p^{-1}(x_{0}))^{wh}$ is a topological group. By the fact that $ (p^{-1}(x_{0}))^{wh}$ agrees with $\pi_{1}^{wh}(X,x_{0})$ (see page 2) we can conclude that  $X$ is SLTL space at $ x_{0} $ if and only if $\pi_{1}^{wh}(X,x_{0})$ is a topological group.

\begin{proposition} \label{Pro2}
A space $X$ is SLTL at $ x_{0} $ if and only if $ (p^{-1}(x_{0}))^{wh}$ is a topological group.
\end{proposition}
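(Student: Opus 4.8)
The plan is to identify the group operation on $p^{-1}(x_0)$ under the bijection $f:\pi_1(X,x_0)\to p^{-1}(x_0)$, $[\alpha]\mapsto\langle\alpha\rangle$, and then translate the topological-group axioms (joint continuity of multiplication and continuity of inversion) into the SLTL condition, using the basis $\{(U,\langle\alpha\rangle)\}$ for the whisker topology. Since the underlying set and group structure are just those of $\pi_1(X,x_0)$, the only thing at issue is continuity of the operations with respect to $(p^{-1}(x_0))^{wh}$. I would begin by recording the explicit form of basic open neighborhoods of a point $\langle\alpha\rangle$ in $p^{-1}(x_0)$: these are $(U,\langle\alpha\rangle)=\{\langle\beta\rangle : \beta\simeq\alpha\ast\varepsilon,\ \varepsilon:(I,0)\to(U,x_0)\}$ for $U$ an open neighborhood of $x_0$; equivalently, under $f$, these correspond to the cosets $[\alpha]\,i_\ast\pi_1(U,x_0)$, as noted in the excerpt via \cite[Lemma 3.1]{Ab}.

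For the forward direction, assume $X$ is SLTL at $x_0$. Inversion: given a basic neighborhood $(U,\langle\alpha^{-1}\rangle)$ of $\langle\alpha\rangle^{-1}=\langle\alpha^{-1}\rangle$, I want a basic neighborhood of $\langle\alpha\rangle$ mapped into it by inversion; since $\alpha$ is an SLT loop there is an open $V\ni x_0$ with $[\alpha\ast\gamma\ast\alpha^{-1}]\in i_\ast\pi_1(U,x_0)$ for every loop $\gamma$ in $V$, which gives $(V,\langle\alpha\rangle)^{-1}\subseteq(U,\langle\alpha^{-1}\rangle)$ after a short coset computation. Multiplication: given $\langle\alpha\rangle,\langle\beta\rangle$ with product $\langle\alpha\ast\beta\rangle$ and a basic neighborhood $(U,\langle\alpha\ast\beta\rangle)$, apply the SLTL property to the loop $\beta$ (with the open set $U$) to obtain $W\ni x_0$ with $[\beta\ast\gamma\ast\beta^{-1}]\in i_\ast\pi_1(U,x_0)$ for loops $\gamma$ in $W$; then $(U,\langle\alpha\rangle)\cdot(W,\langle\beta\rangle)\subseteq(U,\langle\alpha\ast\beta\rangle)$, using $(U,\langle\alpha\rangle)\leftrightarrow[\alpha]i_\ast\pi_1(U,x_0)$, $(W,\langle\beta\rangle)\leftrightarrow[\beta]i_\ast\pi_1(W,x_0)$, and the containment $[\beta]i_\ast\pi_1(W,x_0)[\beta]^{-1}\subseteq i_\ast\pi_1(U,x_0)$. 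Thus both operations are continuous and $(p^{-1}(x_0))^{wh}$ is a topological group.

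For the converse, assume $(p^{-1}(x_0))^{wh}$ is a topological group, and let $\alpha$ be a loop at $x_0$; I must show $\alpha$ is an SLT loop. Fix an open neighborhood $U$ of $x_0$. Continuity of the map $\langle\beta\rangle\mapsto\langle\alpha\rangle\langle\beta\rangle\langle\alpha\rangle^{-1}$ (a composite of multiplications and inversion, hence continuous) at the identity $\langle c_{x_0}\rangle$ yields an open neighborhood of $\langle c_{x_0}\rangle$ inside $(U,\langle c_{x_0}\rangle)$; such a neighborhood contains a basic one, say $(V,\langle c_{x_0}\rangle)\leftrightarrow i_\ast\pi_1(V,x_0)$, whose image under the conjugation map lies in $(U,\langle c_{x_0}\rangle)\leftrightarrow i_\ast\pi_1(U,x_0)$. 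Unwinding, this says exactly that for every loop $\gamma$ in $V$ based at $x_0=\alpha(1)$ we have $[\alpha\ast\gamma\ast\alpha^{-1}]\in i_\ast\pi_1(U,x_0)$, i.e. $\alpha\ast\gamma\ast\alpha^{-1}$ is homotopic rel $\dot I$ to a loop in $U$; hence $\alpha$ is an SLT loop and $X$ is SLTL at $x_0$.

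The main obstacle, and the step I would be most careful about, is the bookkeeping that converts the basis description of the whisker topology on $p^{-1}(x_0)$ into the coset description $[\alpha]i_\ast\pi_1(U,x_0)$ and keeps track of base points of the transfer loops: in Definition \ref{Def1} the loop $\gamma$ is based at $\alpha(1)$ while the SLT condition controls $\alpha\ast\gamma\ast\alpha^{-1}$ near $\alpha(0)$, and here $\alpha$ is a \emph{loop} so $\alpha(0)=\alpha(1)=x_0$ — one must make sure the open set $U$ in the SLT definition is the one measuring proximity to $x_0$ and that the neighborhood $V$ produced is again around $x_0$, so that $(V,\langle c_{x_0}\rangle)$ genuinely makes sense as a basic open set in the fiber. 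Once the dictionary $(U,\langle\alpha\rangle)\leftrightarrow[\alpha]\,i_\ast\pi_1(U,x_0)$ is fixed, the remaining verifications are routine coset manipulations, and continuity of inversion turns out to be a special case (take $\alpha=c_{x_0}$, or reuse the multiplication argument) of the conjugation continuity used throughout.
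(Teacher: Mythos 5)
Your treatment of inversion and your converse are essentially the paper's argument: the paper also converts the SLT property of a loop $\alpha$ into the containment of a basic neighborhood $(V,\langle\alpha\rangle)$ whose image under $\psi(\langle\alpha\rangle)=\langle\alpha^{-1}\rangle$ lies in $(U,\langle\alpha^{-1}\rangle)$, and its converse likewise reads the SLT condition for an arbitrary loop $\alpha$ off the continuity of a group operation evaluated on basic neighborhoods (it uses inversion at $\langle\alpha\rangle$ where you use conjugation at the identity; both are legitimate once one assumes a topological group).

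The step that fails as written is your multiplication argument. Applying SLTL to $\beta$ with the set $U$ gives $[\beta]\,i_\ast\pi_1(W,x_0)\,[\beta]^{-1}\subseteq i_\ast\pi_1(U,x_0)$, but the containment you assert, $(U,\langle\alpha\rangle)\cdot(W,\langle\beta\rangle)\subseteq(U,\langle\alpha\ast\beta\rangle)$, needs the conjugation in the opposite direction: a typical product is $[\alpha][u][\beta][w]=[\alpha\ast\beta]\bigl([\beta]^{-1}[u][\beta]\bigr)[w]$ with $[u]\in i_\ast\pi_1(U,x_0)$, so what must land in $i_\ast\pi_1(U,x_0)$ is $[\beta]^{-1}[u][\beta]$, which your hypothesis does not control (take $[w]=1$ and any $[u]$ with $[\beta]^{-1}[u][\beta]\notin i_\ast\pi_1(U,x_0)$ to see the stated inclusion fail). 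The repair is immediate and uses the same hypothesis: $\beta^{-1}$ is also a loop at $x_0$, so SLTL yields $V$ with $[\beta]^{-1}\,i_\ast\pi_1(V,x_0)\,[\beta]\subseteq i_\ast\pi_1(U,x_0)$, and then $(V,\langle\alpha\rangle)\cdot(U,\langle\beta\rangle)\subseteq(U,\langle\alpha\ast\beta\rangle)$ — the shrunken neighborhood must sit on the left factor. Alternatively you can drop joint continuity of multiplication altogether, which is what the paper does: it verifies only continuity of inversion and invokes the known fact (\cite[Proposition 2.1]{Pasha}, cf. \cite[Proposition 4.20]{BroU}) that continuity of inversion in the whisker topology already forces the group to be a topological group; if you prefer a self-contained proof, the corrected conjugation bookkeeping above supplies the missing half.
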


\begin{proof}
Let $X$ be an SLTL space at $ x_{0} $. It is enough to show that the operation of taking inverse $\psi: (p^{-1}(x_{0}))^{wh}\rightarrow(p^{-1}(x_{0}))^{wh}$ defined by $\psi( \langle \alpha\rangle)= \langle \alpha^{-1} \rangle$ is continuous. Let $\langle \alpha\rangle \in (p^{-1}(x_{0}))^{wh}$ and $N=(U,\langle \alpha^{-1} \rangle)$ be an open basis subset of $(p^{-1}(x_{0}))^{wh}$.  Since $(X,x_{0})$ is an SLTL space at $ x_{0} $, there is a neighborhood $V$ of $x_{0}=\alpha(1)$ such that for a given loop $\gamma:I\rightarrow V$ there is a loop $\mu:I\rightarrow U$ that is homotopic to $\alpha\ast\gamma^{-1}\ast\alpha^{-1}$ relative to $\dot{I}$, that is, $\gamma^{-1}\ast\alpha^{-1}\simeq \alpha^{-1}\ast\mu $ rel $\dot{I}$ or equivalently,
 $\langle \gamma^{-1}\ast\alpha^{-1}\rangle=\langle \alpha^{-1}\ast\mu \rangle$. Consider open subset $M=(V,\langle \alpha \rangle)$ of $\langle \alpha \rangle$ in $ (p^{-1}(x_{0}))^{wh}$. We show that $ \psi(M)\subseteq N$. By the above equality, $\psi(\langle \alpha\ast\gamma\rangle)=\langle \gamma^{-1}\ast\alpha^{-1}\rangle=\langle \alpha^{-1}\ast\mu \rangle \in (U,\langle \alpha^{-1} \rangle)=N$. Therefore, the operation of inversion $\psi$  is continuous. By \cite[Proposition 2.1]{Pasha}, the continuity of operation inversion implies that the concatenation operation is continuous. Accordingly, $ (p^{-1}(x_{0}))^{wh}$ is a topological group.

 Conversely, let $\alpha$ be an arbitrary loop at $x_{0}$ in $X$. We show that $\alpha$ is an SLT path. Let $U$ be an open neighborhood of $x_{0}$ in $X$. Choose the open basis $N=(U,\langle \alpha^{-1} \rangle)$ containing $\langle \alpha^{-1} \rangle$ in $ (p^{-1}(x_{0}))^{wh}$. Since the inverse map $\psi: (p^{-1}(x_{0}))^{wh}\rightarrow(p^{-1}(x_{0}))^{wh}$ is continuous, there is an open subset $M=((V,\langle \alpha \rangle))$ of $\langle \alpha \rangle$ in $(p^{-1}(x_{0}))^{wh}$ such that  $\psi(M)\subseteq N$, that is,  for every loop $\gamma$ inside $V$ based at $x_{0}$ there is a loop $\mu$ in $U$ based at $x_{0}$ such that $\psi(\langle \alpha \ast \gamma^{-1} \rangle)=\langle \gamma \ast \alpha^{-1} \rangle=\langle \alpha^{-1} \ast \mu \rangle$ ,that is,
$\alpha\ast\gamma\ast\alpha^{-1}\simeq\mu $ rel $\dot{I}$. Therefore, $X$ is an SLTL space at $ x_{0} $.
\end{proof}

Proposition \ref{Pro2} shows that $\pi_{1}^{wh}(X,x_{0})$  is a topological group if and only if all loops at $x_{0}$ are SLT loop. In the following corollary, we show that  if $X$ has an SLT loop at $x_{0}$ and all  subgroups  of $\pi_{1}(X,x_{0})$ are normal, then $\pi_{1}^{wh}(X,x_{0})$ is a topological group.
%The following corollarly shows that $\pi_{1}^{wh}(X,x_{0})$  is a topological group with the presence of only an SLT loop at $x_{0}$ when all subgroups of $ \pi_{1}(X,x_{0}) $ are normal.
\begin{corollary} \label{Co3}
Let $X$ be a topological space such that all subgroups of $ \pi_{1}(X,x_{0}) $ are normal. If $X$ has an SLT loop at $x_{0}$, then $\pi_{1}^{wh}(X,x_{0})$ is a topological group.
\end{corollary}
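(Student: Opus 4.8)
The plan is to reduce Corollary \ref{Co3} to Proposition \ref{Pro2}, that is, to show that under the hypotheses the space $X$ is automatically SLTL at $x_{0}$. So let $\alpha$ be an \emph{arbitrary} loop at $x_{0}$; we must show $\alpha$ is an SLT loop, given that there exists \emph{some} SLT loop $\delta$ at $x_{0}$ and that every subgroup of $\pi_{1}(X,x_{0})$ is normal. Fix an open neighborhood $U$ of $x_{0}$. Since $\delta$ is an SLT loop, there is an open neighborhood $V$ of $x_{0}$ such that for every loop $\gamma:(I,\dot I)\to(V,x_{0})$ there is a loop $\gamma':(I,\dot I)\to(U,x_{0})$ with $\delta\ast\gamma\ast\delta^{-1}\simeq\gamma'$ rel $\dot I$. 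I would then try to transfer this property from $\delta$ to $\alpha$ using normality: in $\pi_{1}(X,x_{0})$ we have $[\alpha\ast\gamma\ast\alpha^{-1}]=[\alpha\ast\delta^{-1}]\,[\delta\ast\gamma\ast\delta^{-1}]\,[\alpha\ast\delta^{-1}]^{-1}$, and $[\delta\ast\gamma\ast\delta^{-1}]=[\gamma']\in i_{\ast}\pi_{1}(U,x_{0})$.

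The key observation is that $i_{\ast}\pi_{1}(U,x_{0})$ is a subgroup of $\pi_{1}(X,x_{0})$, hence by hypothesis a normal subgroup; therefore the conjugate $[\alpha\ast\delta^{-1}]\,[\gamma']\,[\alpha\ast\delta^{-1}]^{-1}$ again lies in $i_{\ast}\pi_{1}(U,x_{0})$. Chaining the two displayed equalities gives $[\alpha\ast\gamma\ast\alpha^{-1}]\in i_{\ast}\pi_{1}(U,x_{0})$, i.e.\ there is a loop $\mu:(I,\dot I)\to(U,x_{0})$ with $\alpha\ast\gamma\ast\alpha^{-1}\simeq\mu$ rel $\dot I$. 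Since $U$ was an arbitrary neighborhood of $x_{0}$ and the neighborhood $V$ was produced from $U$ (via the SLT property of the fixed loop $\delta$), this is exactly the statement that $\alpha$ is an SLT loop. As $\alpha$ was arbitrary, $X$ is SLTL at $x_{0}$, and Proposition \ref{Pro2} (together with the identification of $(p^{-1}(x_{0}))^{wh}$ with $\pi_{1}^{wh}(X,x_{0})$ noted on page~2) yields that $\pi_{1}^{wh}(X,x_{0})$ is a topological group.

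The step I expect to require the most care is the bookkeeping that turns the group-theoretic identity $[\alpha\ast\gamma\ast\alpha^{-1}]\in i_{\ast}\pi_{1}(U,x_{0})$ back into a statement about \emph{loops in $U$ based at $\alpha(0)=x_{0}$}, matching Definition \ref{Def1} on the nose: one needs that membership of a homotopy class in $i_{\ast}\pi_{1}(U,x_{0})$ is equivalent to that class being representable by a loop lying in $U$, which is immediate from the definition of $i_{\ast}$. One should also double-check that $V$ is allowed to depend only on $U$ (and on the fixed loop $\delta$), not on $\alpha$ — which is fine, since the whole argument runs with $\delta$ and $U$ chosen first and $\gamma$ chosen last. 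A secondary point worth stating explicitly is that the normality hypothesis is used precisely once, to absorb the conjugation by $[\alpha\ast\delta^{-1}]$; without it the argument would only show that $\alpha$ ``differs'' from an SLT loop by an inner automorphism, which is not enough. No deep input beyond Proposition \ref{Pro2} is needed, so the corollary is essentially a normality trick layered on top of it.
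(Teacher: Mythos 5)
Your proposal is correct and follows essentially the same route as the paper: both reduce the corollary to showing that $X$ is SLTL at $x_{0}$ by combining the single SLT loop with the normality hypothesis, and then invoke Proposition \ref{Pro2}. The only cosmetic difference is that you absorb the conjugation by $[\alpha\ast\delta^{-1}]$ using normality of $i_{\ast}\pi_{1}(U,x_{0})$, whereas the paper applies normality to $i_{\ast}\pi_{1}(V,x_{0})$ to conclude $[\beta]\,i_{\ast}\pi_{1}(V,x_{0})\,[\beta]^{-1}=i_{\ast}\pi_{1}(V,x_{0})\subseteq i_{\ast}\pi_{1}(U,x_{0})$ for every $[\beta]$ --- the same normality trick in a slightly different place.
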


\begin{proof}
Let $\alpha$ be an SLT loop and $U$ be an open subset of $\alpha(0)=x_{0}$ $\in X$. Then, there is an open subset $V$ of $x_{0}$ such that for every $\gamma$ at $x_{0}$ $\in V$ there is a loop $\delta$ at $x_{0}$ $\in U$ such that
$[\alpha\ast\gamma\ast\alpha^{-1}]=[\delta] $, that is, $[\alpha]i_{\ast}\pi_{1}(V,x_{0})[\alpha^{-1}]\subseteq i_{\ast}\pi_{1}(U,x_{0})$. Clearly $i_{\ast}\pi_{1}(V,x_{0})$ is a subgroup of  $\pi_{1}(X,x_{0})$. By assumption,
$i_{\ast}\pi_{1}(V,x_{0})$ is a normal subgroup. Thus, for every $[\beta] \in \pi_{1}(X,x_{0})$, $[\beta] i_{\ast}\pi_{1}(V,x_{0})[\beta^{-1}]=i_{\ast}\pi_{1}(V,x_{0})$. On the other hand,  $[\alpha] i_{\ast}\pi_{1}(V,x_{0})[\alpha^{-1}]=i_{\ast}\pi_{1}(V,x_{0})$ which implies that $[\alpha] i_{\ast}\pi_{1}(V,x_{0})[\alpha^{-1}]=[\beta] i_{\ast}\pi_{1}(V,x_{0})[\beta^{-1}]\subseteq i_{\ast}\pi_{1}(U,x_{0})$. Therefore, $X$ is SLTL at $ x_{0} $ and accordingly,  by Proposition \ref{Pro2}, $\pi_{1}^{wh}(X,x_{0})$ is a topological group.
\end{proof}

Combining \cite[Proposition 4.20]{BroU} and  Proposition \ref{Pro2}, we obtain the following corollary.
\begin{corollary} \label{Co4}
Let $X$ be a topological space. If $ \pi_{1}^{wh}(X,x_{0})=\pi_{1}^{qtop}(X,x_{0}) $, then $X$ is an SLTL space at $ x_{0} $.
\end{corollary}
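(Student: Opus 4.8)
The plan is to deduce Corollary \ref{Co4} directly from two facts already established: the characterization of the topological group property for $\pi_{1}^{wh}(X,x_{0})$ via Proposition \ref{Pro2}, and Brodskiy et al.'s result \cite[Proposition 4.20]{BroU} that continuity of the inversion map in $\pi_{1}^{wh}(X,x_{0})$ suffices to make it a topological group. So the crux is to show that the hypothesis $\pi_{1}^{wh}(X,x_{0})=\pi_{1}^{qtop}(X,x_{0})$ forces inversion to be continuous on $\pi_{1}^{wh}(X,x_{0})$.

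First I would recall, as is already observed in the paragraph preceding Corollary \ref{Co2}, that inversion on $\pi_{1}^{qtop}(X,x_{0})$ is always continuous (this is \cite[Lemma 2]{BrazFa}, or \cite[Lemma 3.1]{Ab}-type facts quoted in the excerpt): $\pi_{1}^{qtop}(X,x_{0})$ is a quasitopological group. Now the assumed equality of topologies means the identity map $\pi_{1}(X,x_{0})\to\pi_{1}(X,x_{0})$ is a homeomorphism between the whisker-topologized group and the $qtop$-topologized group. Hence the inversion map on $\pi_{1}^{wh}(X,x_{0})$ factors as a composite of this homeomorphism, the continuous inversion on $\pi_{1}^{qtop}(X,x_{0})$, and the inverse homeomorphism; therefore it is continuous. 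Applying \cite[Proposition 4.20]{BroU}, $\pi_{1}^{wh}(X,x_{0})$ is a topological group, and then Proposition \ref{Pro2} (together with the identification of $(p^{-1}(x_{0}))^{wh}$ with $\pi_{1}^{wh}(X,x_{0})$ noted on page 2) yields that $X$ is an SLTL space at $x_{0}$.

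I do not anticipate a genuine obstacle here; the statement is a short corollary obtained by chaining three previously cited results, and the only care needed is in spelling out why the topological equality transports the continuity of inversion from one side to the other—a one-line diagram-chase. The one point worth stating explicitly, to keep the argument self-contained, is that no local path connectedness or connectedness hypothesis is required for this direction, in contrast to Corollary \ref{Co2}: the implication ``equality of the two topologies $\Rightarrow$ SLTL at $x_{0}$'' holds for an arbitrary pointed space $X$, precisely because it only uses the always-available continuity of inversion in $\pi_{1}^{qtop}$ and the abstract criterion of \cite[Proposition 4.20]{BroU}.
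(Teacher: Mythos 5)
Your proposal is correct and follows essentially the same route as the paper: the equality of topologies transfers the always-continuous inversion of $\pi_{1}^{qtop}(X,x_{0})$ to $\pi_{1}^{wh}(X,x_{0})$, then \cite[Proposition 4.20]{BroU} makes the latter a topological group, and Proposition \ref{Pro2} yields SLTL at $x_{0}$. Your added remark that no connectedness or local path connectedness is needed is consistent with the paper's statement.
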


 It is easy to see that every SLT space at $x_{0}$ is SLTL at $x_{0}$, but as mentioned before, the converse does not hold necessarily. In the following proposition, we show that the converse does hold with the presence of an SLT path starting at $x_{0}$.

\begin{proposition} \label{Pro3}
Let $X$ be an SLTL space at $ x_{0} $ and let for every $x\in X$ there exists a path $\alpha_{x}$ from $x_{0}$ to $x$ such that $\alpha_{x}$ is an SLT path. Then $X$ is SLT at $x_{0}$.
\end{proposition}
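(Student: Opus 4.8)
The plan is to show that an arbitrary path $\beta$ starting at $x_0$ is an SLT path, using the SLT path $\alpha_x$ to the endpoint $x=\beta(1)$ to ``reduce'' the problem to a loop at $x_0$, and then invoke the SLTL hypothesis on that loop. Concretely, given an open neighborhood $U$ of $\beta(0)=x_0$, I want to produce an open neighborhood $V$ of $\beta(1)$ such that every loop $\gamma$ in $V$ based at $\beta(1)$ satisfies $[\beta\ast\gamma\ast\beta^{-1}]\in i_\ast\pi_1(U,x_0)$.

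First I would write $\beta\ast\gamma\ast\beta^{-1} \simeq (\beta\ast\alpha_x^{-1})\ast(\alpha_x\ast\gamma\ast\alpha_x^{-1})\ast(\alpha_x\ast\beta^{-1})$ rel $\dot I$, noting that $\ell:=\beta\ast\alpha_x^{-1}$ is a loop at $x_0$. Since $\alpha_x$ is an SLT path, for the neighborhood $U$ of $x_0$ there is a neighborhood $W$ of $x$ such that any loop $\gamma$ in $W$ based at $x$ has $[\alpha_x\ast\gamma\ast\alpha_x^{-1}]=[\delta]$ for some loop $\delta$ in $U$ based at $x_0$. Then I would apply the SLTL hypothesis to the loop $\ell$ at $x_0$: for the neighborhood $U$ there is a neighborhood $U'$ of $x_0$ such that every loop $\eta$ in $U'$ based at $x_0$ has $[\ell\ast\eta\ast\ell^{-1}]\in i_\ast\pi_1(U,x_0)$. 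The subtle point is that the loop $\delta$ produced above lies in $U$, not necessarily in $U'$, so a single application of SLTL to $\ell$ with target $U$ is not quite enough; instead I would run the SLTL property of $\ell$ with respect to the \emph{smaller} target $U$ is fine only if $\delta\in U'$. To fix this, I apply SLTL to $\ell$ first, obtaining $U'$, and then apply the SLT property of $\alpha_x$ with target neighborhood $U'$ (rather than $U$): this yields a neighborhood $V$ of $x$ such that every loop $\gamma$ in $V$ based at $x$ gives $[\alpha_x\ast\gamma\ast\alpha_x^{-1}]=[\delta']$ with $\delta'$ a loop in $U'$ based at $x_0$. Now $[\beta\ast\gamma\ast\beta^{-1}] = [\ell\ast\delta'\ast\ell^{-1}] \in i_\ast\pi_1(U,x_0)$ by the choice of $U'$, since $\delta'$ is a loop in $U'$ at $x_0$.

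Thus the required neighborhood is $V$, chosen as above, and every loop in $V$ based at $\beta(1)$ transfers into $U$; since $\beta$ was an arbitrary path from $x_0$, the space $X$ is SLT at $x_0$. The main obstacle — and the only place any care is needed — is the order of quantifiers just described: one must apply the SLTL property of the auxiliary loop $\beta\ast\alpha_x^{-1}$ \emph{before} using the SLT property of $\alpha_x$, so that the SLT property of $\alpha_x$ can be invoked with the intermediate neighborhood $U'$ as its target rather than $U$. Everything else is a routine manipulation of homotopy classes via the identity $\beta\ast\gamma\ast\beta^{-1}\simeq (\beta\ast\alpha_x^{-1})\ast(\alpha_x\ast\gamma\ast\alpha_x^{-1})\ast(\alpha_x\ast\beta^{-1})$ together with the fact that $i_\ast\pi_1(U,x_0)$ is a subgroup, so it is closed under the products that arise.
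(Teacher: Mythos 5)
Your proof is correct and follows essentially the same route as the paper: decompose $\beta\ast\gamma\ast\beta^{-1}$ via the loop $\beta\ast\alpha_x^{-1}$, apply the SLTL hypothesis to that loop first to get an intermediate neighborhood of $x_0$, and then invoke the SLT property of $\alpha_x$ with that intermediate neighborhood as the target. The only cosmetic difference is that the paper takes the intersection $U\cap W$ as the target for $\alpha_x$ (an unnecessary but harmless step), whereas you use the intermediate neighborhood $U'$ directly; the quantifier order you emphasize is exactly the point of the paper's argument.
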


\begin{proof}
Let $\alpha_{x}$ be an SLT path from $x_{0}$ to $x$  and $\beta$ be an arbitrary path from $x_{0}$ to $x$. We show that $\beta$ is an SLT path. Since $X$ is SLTL at $x_{0}$, the loop $\beta\ast\alpha^{-1}$ is an SLT loop at $x_{0}$. Consider an open neighborhood $U$ of $x$. Then, there is an open neighborhood $W$ of $x_{0}$ such that for every loop $\gamma_{W}$ in $W$ based at $x_{0}$ there is a loop $\delta_{U}$ of $x_{0}$ in $U$ such that
$\beta\ast\alpha^{-1}\ast\gamma_{W}\ast\alpha \ast \beta^{-1}\simeq \delta_{U} $ rel $\dot{I}$. Put $S=U\cap W$. Note that $S$ is an open neighborhood of $x_{0}$. So there is an open neighborhood $V$ of $\alpha(1)=\beta(1)=x$ such that for every loop $\gamma_{V}$ in $V$ based at $x$ there is a loop $\delta_{S}$ in $S$ based at $x_{0}$ such that
$\alpha \ast\gamma_{V}\ast\alpha ^{-1}\simeq \delta_{S} $ rel $\dot{I}$.  We have
$\beta\ast\gamma_{V}\ast \beta^{-1}\simeq \beta\ast \alpha^{-1}\ast\alpha \ast\gamma_{V}\ast \alpha^{-1}\ast\alpha \ast\beta^{-1} \simeq \beta\ast \alpha^{-1}\ast \delta_{S} \ast\alpha \ast\beta^{-1} \ rel\ \dot{I}$.  Since $\delta_{S}$ is a loop in $W$ based at $x_{0}$, there is a loop $\lambda_{U}$ in $U$ such that
$\beta\ast \alpha^{-1}\ast \delta_{S} \ast\alpha \ast\beta^{-1}\simeq \lambda_{U} \ rel\ \dot{I}$.
Therefore, we have $\beta\ast \gamma_{U}\ast \beta^{-1} \simeq \lambda_{U} \ rel\ \dot{I}$ which implies that $\beta$ is an SLT path. Hence, by Definition \ref{Def1}, $X$ is an SLT space at $x_{0}$.
\end{proof}

\section{Relationship Between SLT Spaces and Some Subgroups of Fundamental Groups}
\label{Relationship Between}

In this section, we recall the notion of small loop and its relation to SLT spaces. Also, we investigate the relationship between two known subgroups of a fundamental group defined by small loops and some kind of SLT spaces at one point.

\begin{definition}\cite{Virk} \label{Def5}
A loop $\alpha: (I,0)\rightarrow (X, x_{0})$ is called small if there exists a representative of the homotopy class $[\alpha]\in{\pi_{1}(X,x_{0})}$ in every open neighborhood $U$ of $x_{0}$. The space $X$ is called small loop space if for any $x\in{X}$, every loop $\alpha: (I,0)\rightarrow (X, x)$ is small.
\end{definition}
%%%%%%%%%%%%%%%%%%%%%%%%%%%%%%%%%%%%%%%%%%
 It is easy to show that every small loop space is SLT space. In \cite{Virk}, Virk introduced two subgroups of $\pi_{1}(X,x_{0})$ based on small loops. The subgroup of $\pi_{1}(X,x_{0})$ consisting of homotopy classes of all small loops is denoted by $\pi_{1}^{s}(X,x_{0})$. Also the subgroup of $\pi_{1}(X,x_{0})$ generated by the set $ \lbrace [\alpha\ast\beta\ast\alpha^{-1}] \ \vert \ [\beta]\in{\pi_{1}^{s}(X,\alpha(1))} ,  \alpha\in{P(X,x_{0})} \rbrace $ is denoted by $\pi_{1}^{sg}(X,x_{0})$. It is easy to see that $\pi_{1}^{s}(X,x_0) \leq \pi_{1}^{sg}(X,x_0)$ for every $x_0\in{X}$ \cite[Theorem 2.1]{Pak}. Note that these subgroups are not equal, in general. As an example, consider $X=HA$ with any semilocally simply conneced point $x_0$.  If $X$ is a semilocally simply connected or small loop space, then the equality $\pi_{1}^{s}(X,x_0)=\pi_{1}^{sg}(X,x_0)$ holds. In the following, we show that the equality holds for SLT spaces.

\begin{proposition} \label{Pro4}
If $X$ is an SLT space at $ x_{0} $, then $ \pi_{1}^{s}(X,x_{0})=\pi_{1}^{sg}(X,x_{0}) $.
\end{proposition}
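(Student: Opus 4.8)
The plan is to show that it suffices to prove $\pi_1^{sg}(X,x_0) \leq \pi_1^{s}(X,x_0)$, since the reverse inclusion is already recorded in the excerpt (\cite[Theorem 2.1]{Pak}). A generator of $\pi_1^{sg}(X,x_0)$ has the form $[\alpha \ast \beta \ast \alpha^{-1}]$, where $\alpha \in P(X,x_0)$ and $[\beta] \in \pi_1^{s}(X,\alpha(1))$, i.e. $\beta$ is a small loop based at $\alpha(1)$. So the heart of the matter is: given a small loop $\beta$ at $x := \alpha(1)$ and a path $\alpha$ from $x_0$ to $x$, show that $\alpha \ast \beta \ast \alpha^{-1}$ is (homotopic rel $\dot I$ to) a small loop based at $x_0$.

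First I would unwind what ``small loop'' means here via Definition~\ref{Def5}: $\beta$ is small iff for every open neighborhood $W$ of $x$ there is a loop $\beta_W$ in $W$ based at $x$ with $[\beta_W] = [\beta]$ in $\pi_1(X,x)$. Then I would fix an arbitrary open neighborhood $U$ of $x_0$ and aim to produce a representative of $[\alpha \ast \beta \ast \alpha^{-1}]$ lying entirely inside $U$. Here is where the SLT hypothesis at $x_0$ enters: since $X$ is SLT at $x_0$, the path $\alpha$ (which starts at $x_0$) is an SLT path, so for the neighborhood $U$ of $\alpha(0)=x_0$ there is an open neighborhood $V$ of $\alpha(1)=x$ such that every loop $\gamma$ in $V$ based at $x$ satisfies $[\alpha \ast \gamma \ast \alpha^{-1}] \in i_*\pi_1(U,x_0)$, in fact $\alpha \ast \gamma \ast \alpha^{-1}$ is homotopic rel $\dot I$ to a loop inside $U$. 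Now apply smallness of $\beta$ to this particular neighborhood $V$: there is a loop $\beta_V$ inside $V$ based at $x$ with $[\beta_V] = [\beta]$. Combining, $[\alpha \ast \beta \ast \alpha^{-1}] = [\alpha \ast \beta_V \ast \alpha^{-1}]$, and the latter has a representative inside $U$.

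Since $U$ was an arbitrary open neighborhood of $x_0$, this shows $\alpha \ast \beta \ast \alpha^{-1}$ represents a small loop at $x_0$, i.e. $[\alpha \ast \beta \ast \alpha^{-1}] \in \pi_1^{s}(X,x_0)$. As the generators of $\pi_1^{sg}(X,x_0)$ all lie in the subgroup $\pi_1^{s}(X,x_0)$, we conclude $\pi_1^{sg}(X,x_0) \leq \pi_1^{s}(X,x_0)$, and together with the always-valid reverse inclusion we get equality.

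I do not anticipate a serious obstacle; the argument is essentially a bookkeeping combination of the definition of smallness with the definition of an SLT path. The one point requiring a little care is the interplay of the two ``for all neighborhoods'' quantifiers: one must apply the SLT property \emph{first} to obtain $V$ from $U$, and only \emph{then} invoke smallness of $\beta$ with respect to $V$ (not the other way around), so that the small representative of $\beta$ lands in exactly the neighborhood $V$ that the SLT property can transfer into $U$. A secondary subtlety is making sure the homotopies are rel $\dot I$ throughout, so that the concatenated homotopies compose legitimately and the resulting loop is genuinely based at $x_0$; this follows from the ``homotopic rel $\dot I$'' clause in Definition~\ref{Def1}.
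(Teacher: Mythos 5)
Your proposal is correct and follows essentially the same route as the paper's proof: apply the SLT property of $\alpha$ to the given neighborhood $U$ of $x_0$ to obtain $V$, then use smallness of $\beta$ to replace it by a representative inside $V$, and transfer to get a representative of $[\alpha\ast\beta\ast\alpha^{-1}]$ inside $U$, so every generator of $\pi_{1}^{sg}(X,x_{0})$ is small. Your explicit remark about the order of the quantifiers (SLT first, smallness second) is exactly the point the paper's argument relies on, stated more carefully.
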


\begin{proof}
 Let   $[\alpha\ast\beta\ast\alpha^{-1}]$
be a generator of  $ \pi_{1}^{sg}(X,x_{0})$ .
 We show that $ \alpha \ast \beta \ast \alpha^{-1}$  is a small loop. By the definition of $\pi_{1}^{sg}(X,x_{0}) $ ,  $\beta$ is a small loop. Let  $U$ be a neighborhood   of $x_{0}=\alpha(0)$.  Since $X$  is SLT at $x_{0}$,
   for every open neighborhood  $U$  of  $x_{0}=\alpha(0)$  there is a neighborhood $V$ of $x_{1}=\alpha(1)$  such that for every loop    $\delta:I \rightarrow V$  there is a loop $\lambda:I \rightarrow U$
     such that    $\alpha\ast \delta\ast\alpha^{-1} \simeq \lambda$  rel $\dot{I}$ .
 On the other hand,  since $\beta$ is a small loop, we have  $\beta \simeq \delta$ rel $\dot{I}$. Therefore,
 $\alpha\ast \delta\ast\alpha^{-1} \simeq \alpha\ast \beta\ast\alpha^{-1}  \simeq \lambda$ rel $\dot{I}$ and so  $[\alpha\ast\beta\ast\alpha^{-1}]\in\pi_{1}^{sg}(X,x_{0}) $.  Hence $ \pi_{1}^{sg}(X,x_{0})\leq\pi_{1}^{s}(X,x_{0}) $.
\end{proof}

Note that since $\pi_{1}^{sg}(X,x_0) $ is a normal subgroup of $\pi_{1}(X,x_0) $, then so is $\pi_{1}^{s}(X,x) $ if $X$ is an SLT space.

Torabi et al. \cite{TorabiP} proved that the topological fundamental group of a small loop space is an indiscrete topological group. The converse of this statement holds when $X$ is connected, locally path connected and semilocally small loop space (see \cite[Theorem 4.6]{Pak}). We can show that a similar result holds when $X$ is an SLT space.
\begin{corollary}\label{Co5}
If $X$ is a connnected, locally path connected and SLT space and $\pi_{1}^{qtop}(X,x_{0})$ is an indiscrete topological group, then $X$ is a small loop space.
\end{corollary}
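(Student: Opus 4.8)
The plan is to unwind the hypothesis that $\pi_{1}^{qtop}(X,x_{0})$ is indiscrete and combine it with Corollary~\ref{Co2} to force every loop at $x_{0}$ to be small, and then to propagate this to every point of $X$ using path connectivity together with the SLT property. First I would note that, since $X$ is connected, locally path connected and SLT (hence SLT at $x_{0}$), Corollary~\ref{Co2} gives that $\pi_{1}^{qtop}(X,x_{0})$ and $\pi_{1}^{wh}(X,x_{0})$ are topological groups and, by Theorem~\ref{Th1}, they coincide. Therefore the indiscreteness of $\pi_{1}^{qtop}(X,x_{0})$ transfers to $\pi_{1}^{wh}(X,x_{0})$: the only open sets are $\varnothing$ and the whole group.

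Next I would exploit the explicit basis for the whisker topology recalled in the proof of Theorem~\ref{Th1}, namely that $\{[\alpha]\,i_{\ast}\pi_{1}(U,x_{0}) \mid [\alpha]\in\pi_{1}(X,x_{0})\}$ is a basis for $\pi_{1}^{wh}(X,x_{0})$; equivalently, for every open neighborhood $U$ of $x_{0}$ the set $i_{\ast}\pi_{1}(U,x_{0})$ is open. Indiscreteness then forces $i_{\ast}\pi_{1}(U,x_{0})=\pi_{1}(X,x_{0})$ for every open neighborhood $U$ of $x_{0}$. In other words, every loop at $x_{0}$ is homotopic rel $\dot I$ to a loop lying inside $U$, for every such $U$; by Definition~\ref{Def5} this says precisely that every loop based at $x_{0}$ is small, i.e. $\pi_{1}^{s}(X,x_{0})=\pi_{1}(X,x_{0})$.

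It remains to pass from the base point $x_{0}$ to an arbitrary point $x\in X$. Given a loop $\gamma$ based at $x$, choose (by path connectivity) a path $\alpha$ from $x_{0}$ to $x$; then $\alpha\ast\gamma\ast\alpha^{-1}$ is a loop at $x_{0}$, hence small by the previous paragraph. Now I would invoke the SLT property of $X$ at the point $x$: given an open neighborhood $U$ of $x$, transfer smallness back along $\alpha$. More precisely, applied to the path $\alpha^{-1}$ from $x$ to $x_{0}$, the SLT condition produces, for any open neighborhood $U$ of $x$, an open neighborhood $V$ of $x_{0}$ so that a loop inside $V$ at $x_{0}$ conjugated by $\alpha^{-1}$ is homotopic rel $\dot I$ to a loop inside $U$ at $x$; choosing a representative of $\alpha\ast\gamma\ast\alpha^{-1}$ inside $V$ (possible since that loop is small) and conjugating yields a representative of $\gamma$ inside $U$. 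Since $U$ was arbitrary, $\gamma$ is small, so $X$ is a small loop space.

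The main obstacle I anticipate is the bookkeeping in this last step: one must be careful about the direction of the SLT condition (it transfers loops from the terminal end of a path to the initial end, so one applies it to $\alpha^{-1}$ rather than $\alpha$) and about the fact that SLT of $X$ at every point — not merely at $x_{0}$ — is genuinely used here, which is why the hypothesis is ``$X$ is an SLT space'' rather than ``SLT at $x_{0}$.'' Everything else is a direct chain of the already-established results (Theorem~\ref{Th1}, Corollary~\ref{Co2}) together with the definitions of the whisker basis and of a small loop.
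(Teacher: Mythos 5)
Your proof is correct, but it follows a genuinely different route from the paper. The paper's own argument is much shorter and leans on external machinery: since $X$ is SLT at every point, Proposition \ref{Pro4} gives $\pi_{1}^{s}(X,x)=\pi_{1}^{sg}(X,x)$ for all $x$, and then \cite[Theorem 2.2]{TorabiP}, which ties indiscreteness of $\pi_{1}^{qtop}(X,x)$ to the equality $\pi_{1}^{sg}(X,x)=\pi_{1}(X,x)$, immediately yields $\pi_{1}^{s}(X,x)=\pi_{1}(X,x)$ at every basepoint (implicitly using that indiscreteness at $x_{0}$ passes to all basepoints by the change-of-basepoint homeomorphism). You instead avoid $\pi_{1}^{sg}$ and the citation to \cite{TorabiP} altogether: Theorem \ref{Th1} transfers indiscreteness to $\pi_{1}^{wh}(X,x_{0})$, the whisker basis $\{[\alpha]\,i_{\ast}\pi_{1}(U,x_{0})\}$ from \cite{Ab} then forces $i_{\ast}\pi_{1}(U,x_{0})=\pi_{1}(X,x_{0})$ for every neighborhood $U$ of $x_{0}$ (which is exactly Definition \ref{Def5} at $x_{0}$), and you propagate smallness to an arbitrary basepoint $x$ by conjugating along a path and applying the SLT condition to $\alpha^{-1}$ — and you correctly identify the directionality of Definition \ref{Def1}, which is the one place such an argument could go wrong. (Note you could shortcut your second step: Corollary \ref{Co1} already says that SLT at $x_{0}$ makes each $i_{\ast}\pi_{1}(U,x_{0})$ open in $\pi_{1}^{qtop}(X,x_{0})$, so indiscreteness gives $i_{\ast}\pi_{1}(U,x_{0})=\pi_{1}(X,x_{0})$ without invoking the whisker topology or Corollary \ref{Co2} at all.) What your approach buys is self-containedness and an explicit treatment of the basepoint change that the paper glosses over; what the paper's approach buys is brevity and the reuse of the $\pi_{1}^{s}$ versus $\pi_{1}^{sg}$ framework developed in Section \ref{Relationship Between}.
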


\begin{proof}
By assumption and Proposition \ref{Pro4}, $ \pi_{1}^{s}(X,x)=\pi_{1}^{sg}(X,x) $ for every $x\in{X}$. Therefore, using \cite [Theorem 2.2]{TorabiP} and indiscreteness of $\pi_{1}^{qtop}(X,x)$ we imply that $ \pi_{1}^{s}(X,x)=\pi_{1}(X,x) $ and accordingly, $X$ is a small loop space.
\end{proof}

As proved in Proposition  \ref{Pro4}, if $X$ is SLT at $x_{0}$, then $ \pi_{1}^{s}(X,x_{0})=\pi_{1}^{sg}(X,x_{0}) $. It seems interesting to find a necessary and sufficient condition for
the equality $ \pi_{1}^{s}(X,x_{0})=\pi_{1}^{sg}(X,x_{0}) $. In order to do this, we give the following notion.

\begin{definition} \label{Def6}
A topological space $X$ is called small ``small loop" transfer at $x_{0}$  (SSLT at $ x_{0} $ for short) if for every path  $\alpha $  starting at $ x_{0} $ in $X$ and every open neighborhood $U$ of $x_0=\alpha (0)$, there is an open neighborhood $V$ of $x_1=\alpha(1)$ such that for a  given small loop $\beta:I\rightarrow V$ at  $x_{1}$ there is a loop $\gamma:(I,\dot{I})\rightarrow (U,\alpha(0))$  that is homotopic to $\alpha \ast\beta \ast\alpha ^{-1}$ relative to $\dot{I}$.
\end{definition}

The following lemma comes easily from the definition.
\begin{lemma} \label{Lem1}
 A topological space  $X$ is an SSLT space at $x_{0}$  if and only if $$[\alpha]\pi_{1}^{s}(X,x_{1})[\alpha^{-1}] \leq \pi_{1}^{s}(X,x_{0}),$$ for every path $\alpha$  with $\alpha(0)=x_0$ and $\alpha(1)=x_1$.
\end{lemma}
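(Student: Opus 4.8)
The plan is to prove the biconditional directly by unwinding the definition of the subgroups $\pi_1^s$ and $\pi_1^{sg}$ and matching each clause of the SSLT condition against a containment of conjugates. The statement is essentially a reformulation of Definition~\ref{Def6}, so I expect the proof to be short and the only subtle point is keeping track of the difference between the pointwise condition (about individual small loops in a neighborhood $V$) and the group-theoretic condition (about the whole subgroup $\pi_1^s(X,x_1)$).

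First I would prove the forward implication. Assume $X$ is SSLT at $x_0$, and let $\alpha$ be a path with $\alpha(0)=x_0$, $\alpha(1)=x_1$. Take any $[\beta]\in\pi_1^s(X,x_1)$, so $\beta$ is a small loop at $x_1$; I must show $[\alpha\ast\beta\ast\alpha^{-1}]\in\pi_1^s(X,x_0)$, i.e.\ that $\alpha\ast\beta\ast\alpha^{-1}$ is a small loop at $x_0$. Fix an arbitrary open neighborhood $U$ of $x_0$. By the SSLT hypothesis there is an open neighborhood $V$ of $x_1$ such that every small loop in $V$ based at $x_1$ becomes, after conjugation by $\alpha$, homotopic rel $\dot I$ to a loop lying in $U$. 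Since $\beta$ is small, it has a representative $\beta'$ with image in $V$ and $[\beta']=[\beta]$; this $\beta'$ is itself a small loop in $V$, so by SSLT there is a loop $\gamma$ in $U$ with $\alpha\ast\beta'\ast\alpha^{-1}\simeq\gamma$ rel $\dot I$, whence $[\alpha\ast\beta\ast\alpha^{-1}]=[\alpha\ast\beta'\ast\alpha^{-1}]=[\gamma]$ has a representative in $U$. As $U$ was arbitrary, $\alpha\ast\beta\ast\alpha^{-1}$ is small, so $[\alpha\ast\beta\ast\alpha^{-1}]\in\pi_1^s(X,x_0)$; since such classes generate the left-hand side and $\pi_1^s(X,x_0)$ is a subgroup, we get $[\alpha]\pi_1^s(X,x_1)[\alpha^{-1}]\leq\pi_1^s(X,x_0)$.

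For the converse, assume the containment of conjugates holds for every path $\alpha$ starting at $x_0$. Let $\alpha$ be such a path with $\alpha(1)=x_1$, let $U$ be an open neighborhood of $x_0$, and let $\beta$ be a small loop in some neighborhood of $x_1$ — I need to produce the neighborhood $V$ required by Definition~\ref{Def6}. Here the natural choice is simply $V=X$ (or any fixed neighborhood of $x_1$): given any small loop $\beta\colon I\to V$ based at $x_1$, we have $[\beta]\in\pi_1^s(X,x_1)$, so by hypothesis $[\alpha\ast\beta\ast\alpha^{-1}]\in\pi_1^s(X,x_0)$, which means $\alpha\ast\beta\ast\alpha^{-1}$ is a small loop at $x_0$ and hence has a representative $\gamma\colon(I,\dot I)\to(U,x_0)$ with $\gamma\simeq\alpha\ast\beta\ast\alpha^{-1}$ rel $\dot I$. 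This is exactly the conclusion of Definition~\ref{Def6}, so $X$ is SSLT at $x_0$.

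The only place requiring care — and the reason the word ``easily'' in the paper's lead-in is justified rather than glib — is the asymmetry in Definition~\ref{Def6}: the pointwise statement quantifies over small loops contained in $V$, whereas $\pi_1^s(X,x_1)$ consists of homotopy classes of small loops not a priori lying in any particular neighborhood. The resolution in the forward direction is that a small loop, by definition, admits a representative in every neighborhood, so we may always replace $\beta$ by a representative inside the $V$ handed to us by the hypothesis; in the backward direction the issue evaporates because the choice of $V$ is ours and can be taken as large as we like. I would state this observation explicitly at the start so the two directions read cleanly.
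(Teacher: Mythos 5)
Your proof is correct and follows exactly the route the paper has in mind: the authors omit the argument, remarking only that the lemma ``comes easily from the definition,'' and your unwinding of Definition~\ref{Def6} is that routine argument, with the two genuinely relevant points (replacing a small loop by a path-homotopic representative inside the neighborhood $V$ supplied by the SSLT condition, and taking $V=X$ in the converse since smallness of $\alpha\ast\beta\ast\alpha^{-1}$ already yields a representative in any prescribed $U$) handled correctly. Nothing further is needed.
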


Clearly every SLT space at $x_{0}$  is SSLT at $x_{0}$ but the converse does not hold, in general. As an example,  $X=HE$ is not SLT at any point (see \cite[Proposition 4.10]{BroU}  ) but $HE$ is SSLT at every point because $X$ does not have any nontrivial small loop.

\begin{theorem} \label{Th2}
A space  $X$ is SSLT at $x_{0}$ if and only if  $ \pi_{1}^{s}(X,x_{0})=\pi_{1}^{sg}(X,x_{0}) $.
\end{theorem}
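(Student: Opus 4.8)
The plan is to prove the two implications separately, using Lemma~\ref{Lem1} as the bridge between the geometric SSLT condition and the algebraic statement about conjugates of $\pi_1^s$. Throughout, recall that $\pi_1^{sg}(X,x_0)$ is by definition the subgroup generated by all elements of the form $[\alpha\ast\beta\ast\alpha^{-1}]$ with $[\beta]\in\pi_1^s(X,\alpha(1))$ and $\alpha\in P(X,x_0)$, and that $\pi_1^s(X,x_0)\leq\pi_1^{sg}(X,x_0)$ always holds (the inclusion coming from constant paths).

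\emph{Sufficiency (SSLT at $x_0$ $\Rightarrow$ equality).} Assume $X$ is SSLT at $x_0$. Since the reverse inclusion is automatic, it suffices to show $\pi_1^{sg}(X,x_0)\leq\pi_1^s(X,x_0)$, and since $\pi_1^{sg}$ is generated by the displayed conjugates, it is enough to check each generator $[\alpha\ast\beta\ast\alpha^{-1}]$ lies in $\pi_1^s(X,x_0)$. But $[\beta]\in\pi_1^s(X,x_1)$ where $x_1=\alpha(1)$, so $[\alpha\ast\beta\ast\alpha^{-1}]\in[\alpha]\pi_1^s(X,x_1)[\alpha^{-1}]$, and Lemma~\ref{Lem1} gives exactly $[\alpha]\pi_1^s(X,x_1)[\alpha^{-1}]\leq\pi_1^s(X,x_0)$. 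Hence every generator of $\pi_1^{sg}(X,x_0)$ lies in $\pi_1^s(X,x_0)$, so the two groups coincide.

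\emph{Necessity (equality $\Rightarrow$ SSLT at $x_0$).} Assume $\pi_1^s(X,x_0)=\pi_1^{sg}(X,x_0)$. By Lemma~\ref{Lem1} it suffices to show $[\alpha]\pi_1^s(X,x_1)[\alpha^{-1}]\leq\pi_1^s(X,x_0)$ for every path $\alpha$ from $x_0$ to $x_1$. Pick any $[\beta]\in\pi_1^s(X,x_1)$; then $[\alpha\ast\beta\ast\alpha^{-1}]$ is precisely one of the generators of $\pi_1^{sg}(X,x_0)$, so it belongs to $\pi_1^{sg}(X,x_0)=\pi_1^s(X,x_0)$. As $[\beta]$ ranges over $\pi_1^s(X,x_1)$ we conclude $[\alpha]\pi_1^s(X,x_1)[\alpha^{-1}]\subseteq\pi_1^s(X,x_0)$, which is the hypothesis of Lemma~\ref{Lem1}, so $X$ is SSLT at $x_0$.

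\emph{Main obstacle.} The genuine content of the argument is packaged into Lemma~\ref{Lem1}, whose proof (claimed to be ``easy from the definition'') requires unwinding the SSLT condition: one must verify that the condition ``for every open $U\ni\alpha(0)$ there is open $V\ni\alpha(1)$ so that any small loop in $V$, conjugated by $\alpha$, is homotopic rel $\dot I$ to a loop in $U$'' is equivalent to $[\alpha]\pi_1^s(X,x_1)[\alpha^{-1}]\leq\pi_1^s(X,x_0)$. The forward direction uses that a small loop at $x_1$ has a representative inside every $V$, so its $\alpha$-conjugate has a representative inside every $U$, hence is small at $x_0$; the reverse direction takes $\beta$ small at $x_1$, notes $[\alpha\ast\beta\ast\alpha^{-1}]$ is small at $x_0$ hence has a representative $\gamma$ in any given $U$, and one then simply takes $V=X$ (or any neighborhood, since smallness of $\beta$ already gives a representative in every $V$). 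Once Lemma~\ref{Lem1} is in hand, the theorem itself is a one-line matching of definitions in each direction, with no further obstacle.
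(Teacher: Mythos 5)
Your proof is correct and takes essentially the same route as the paper's: both directions are reduced to Lemma~\ref{Lem1} together with the fact that $\pi_{1}^{sg}(X,x_{0})$ is generated by the conjugates $[\alpha\ast\beta\ast\alpha^{-1}]$ with $[\beta]\in\pi_{1}^{s}(X,\alpha(1))$. Your additional sketch of Lemma~\ref{Lem1} (taking $V=X$ for the converse) is sound and fills in a detail the paper leaves as ``easy from the definition.''
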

\begin{proof}
Let $X$ be SSLT at $x_{0}$. We know that $\pi_{1}^{s}(X,x_{0})\leq \pi_{1}^{sg}(X,x_{0})$. Let $[\alpha\ast\beta\ast\alpha^{-1}]$ be a generator of $\pi_{1}^{sg}(X,x_{0})$ where $\alpha(0)=x_{0}$ , $\alpha(1)=x$ and $[\beta]\in{\pi_{1}^{s}(X,x)}$. So, by Lemma \ref{Lem1} $[\alpha\ast\beta\ast\alpha^{-1}]\in{\pi_{1}^{s}(X,x)}$ which implies that  $\pi_{1}^{sg}(X,x_{0})\leq \pi_{1}^{s}(X,x_{0})$ and accordingly,  $\pi_{1}^{s}(X,x_{0})=\pi_{1}^{sg}(X,x_{0}) $.

To prove the other direction, we use Lemma \ref{Lem1}. If $\pi_{1}^{s}(X,x_{0})=\pi_{1}^{sg}(X,x_{0}) $, then according to the definition of $\pi_{1}^{sg}(X,x_{0}) $, one can easily see that $$ [\alpha]\pi_{1}^{s}(X,x)[\alpha^{-1}]\leq\pi_{1}^{s}(X,x_{0}).$$ Hence, $X$ is an  SSLT space at $x_{0}$.
\end{proof}

In locally path connected spaces, the property of semilocally small generated is equivalent to the property of openness of $\pi_{1}^{sg}(X,x_{0})$ in $\pi_{1}^{qtop}(X,x_{0})$
(see \cite{TorabiP}). The following corollary shows that this result holds for $\pi_{1}^{s}(X,x_{0})$ when $X$ is an SSLT space. Recall that $X$ is  semilocally small generated if for every $x \in X$ there exists an open neighborhood $U$ of $x$ such that  $i_{\ast}\pi_{1}(U,x_{0}) \leq\pi_{1}^{sg}(X,x_{0})$  (see \cite[Definition 3.5]{TorabiP}). The proof of the following corollary directly follows from Theorem \ref{Th2}.
\begin{corollary}\label{Co6}
Let $X$ be a locally path connected SSLT space at $x_{0}$. Then $X$ is a semilocally small generated space if and only if $ \pi_{1}^{s}(X,x_{0}) $  is an open subgroup of $\pi_{1}^{qtop}(X,x_{0})$
\end{corollary}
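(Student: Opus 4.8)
The plan is to deduce this corollary from Theorem \ref{Th2} together with the known equivalence of Torabi et al.\ \cite{TorabiP}, which says that for a locally path connected space $X$, being semilocally small generated is equivalent to $\pi_{1}^{sg}(X,x_{0})$ being an open subgroup of $\pi_{1}^{qtop}(X,x_{0})$. Since that equivalence is already available, essentially the only thing to do is to replace $\pi_{1}^{sg}(X,x_{0})$ by the subgroup $\pi_{1}^{s}(X,x_{0})$, which is legitimate precisely because of the SSLT hypothesis.

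Concretely, I would first invoke Theorem \ref{Th2}: as $X$ is SSLT at $x_{0}$, we have $\pi_{1}^{s}(X,x_{0})=\pi_{1}^{sg}(X,x_{0})$ as subsets of $\pi_{1}(X,x_{0})$, hence also as subsets of $\pi_{1}^{qtop}(X,x_{0})$. Therefore one of them is open in $\pi_{1}^{qtop}(X,x_{0})$ if and only if the other is. Next, using local path connectedness of $X$, I would apply the cited result of \cite{TorabiP}: $X$ is semilocally small generated $\iff$ $\pi_{1}^{sg}(X,x_{0})$ is open in $\pi_{1}^{qtop}(X,x_{0})$. Chaining the two equivalences, semilocally small generated $\iff$ $\pi_{1}^{sg}(X,x_{0})$ open $\iff$ $\pi_{1}^{s}(X,x_{0})$ open, gives the claim.

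The only point requiring attention, and thus the (mild) main obstacle, is to confirm that the hypotheses of the \cite{TorabiP} equivalence are met exactly as quoted: it is stated for locally path connected spaces, which is assumed, and at the basepoint $x_{0}$; if some path-connectivity is tacitly used there to move openness of $\pi_{1}^{sg}$ between basepoints inside the definition of ``semilocally small generated'', one should either record that hypothesis or simply cite the theorem in the precise form needed. Beyond this bookkeeping no further work is needed — there are no homotopies to build and no covering-space arguments to run — since all the substantive content is already in Theorem \ref{Th2}.
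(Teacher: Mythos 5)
Your proposal is correct and matches the paper's argument: the paper also deduces the corollary directly from Theorem \ref{Th2} (giving $\pi_{1}^{s}(X,x_{0})=\pi_{1}^{sg}(X,x_{0})$) combined with the result of Torabi et al.\ that, for locally path connected spaces, semilocally small generated is equivalent to openness of $\pi_{1}^{sg}(X,x_{0})$ in $\pi_{1}^{qtop}(X,x_{0})$.
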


\begin{definition}\label{Def7}
A topological space $X$ is called an small loop transfer space at $ x_{0} $ with respect to non-loop path ($ SLTP$ at $ x_{0} $ for short) if  every non-loop path  $\alpha $  with  $ x_{0}=\alpha(0)\neq\alpha(1) $ in $X$ is an SLT path.
\end{definition}

Clearly every  SLT space at $ x_{0} $ is SLTP at $ x_{0} $ but the converse does not hold, in general. As an example, $ X=HA\vee S^{1} $ is not SLT at any point, but it is SLTP at the wedge point.

%%%%%%%%%%%%%%%%%%%%%%%%%%%%%%%%%%%%%%%%%%%%
\begin{theorem} \label{Th3}
If  $X$ is an $ SLTP$ space at $ x_{0} $  and  for every open neighborhood  $U\subseteq X$,  $i_{\ast}\pi_{1}(U,x_{0})\unlhd \pi_{1}(X,x_{0})$, then $X$ is SLT at $x_{0}$.
\end{theorem}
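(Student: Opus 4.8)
The plan is to show that every path $\beta$ starting at $x_0$ is an SLT path, splitting into two cases according to whether $\beta$ is a loop or not. The non-loop case is immediate from the hypothesis that $X$ is SLTP at $x_0$, so the real content is handling loops. So suppose $\beta$ is a loop at $x_0$, i.e. $\beta(0)=\beta(1)=x_0$, and fix an open neighborhood $U$ of $x_0$. The idea is to manufacture a genuine non-loop path out of $\beta$ by concatenating with a short path into a neighborhood, apply the SLTP hypothesis to that non-loop path, and then use the normality hypothesis to absorb the error terms back into $i_\ast\pi_1(U,x_0)$.

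First I would, given $U$, pick any point $x_1 \ne x_0$ in $X$ (if no such point exists, $X$ is a single point and the statement is trivial) together with a path $\sigma$ from $x_0$ to $x_1$; by local path connectedness — or just directly — one can choose $\sigma$ and a neighborhood of $x_1$ appropriately, but actually it is cleaner to argue as follows. Consider the non-loop path $\beta \ast \sigma$ from $x_0$ to $x_1$. By the SLTP hypothesis applied to $\beta\ast\sigma$ and the neighborhood $U$ of $x_0$, there is a neighborhood $V_1$ of $x_1$ such that any loop $\gamma$ in $V_1$ at $x_1$ satisfies $[(\beta\ast\sigma)\ast\gamma\ast(\beta\ast\sigma)^{-1}] \in i_\ast\pi_1(U,x_0)$. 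Separately, apply the SLTP hypothesis to the non-loop path $\sigma$ itself (with neighborhood $U$ of $x_0$), getting a neighborhood $V_2$ of $x_1$ with $[\sigma\ast\gamma\ast\sigma^{-1}] \in i_\ast\pi_1(U,x_0)$ for loops $\gamma$ in $V_2$ at $x_1$. Now for a loop $\gamma$ in $V_1\cap V_2$ at $x_1$ we have, modulo $i_\ast\pi_1(U,x_0)$,
\[
[\beta\ast\gamma'\ast\beta^{-1}] \cdot [\sigma\ast\gamma\ast\sigma^{-1}]^{-1}
\]
type relations linking $[\beta\ast(\sigma\ast\gamma\ast\sigma^{-1})\ast\beta^{-1}]$ with $[(\beta\ast\sigma)\ast\gamma\ast(\beta\ast\sigma)^{-1}]$; since the latter lies in $i_\ast\pi_1(U,x_0)$ and, by normality, conjugating the former's ``$\sigma\ast\gamma\ast\sigma^{-1}$'' part (which is in $i_\ast\pi_1(U,x_0)$) by $[\beta]$ keeps it in $i_\ast\pi_1(U,x_0)$, we conclude $[\beta\ast\sigma\ast\gamma\ast\sigma^{-1}\ast\beta^{-1}] \in i_\ast\pi_1(U,x_0)$. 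The missing step is to go from control over loops of the form $\sigma\ast\gamma\ast\sigma^{-1}$ near $x_1$ to control over an \emph{arbitrary} loop at $x_0$ inside some neighborhood: here one should instead run the whole argument with $x_1$ replaced by a point in a small neighborhood $V$ of $x_0$ and $\sigma$ a short path inside that neighborhood, so that an arbitrary small loop $\delta$ at $x_0$ in $V$ can be written (up to homotopy rel $\dot I$) as $\sigma\ast\gamma\ast\sigma^{-1}$ for a loop $\gamma$ at $x_1$ in a correspondingly small neighborhood — this is where local path connectedness, if needed, enters.

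More carefully, then: given $U$, choose a path-connected open neighborhood $W$ of $x_0$; pick $x_1\in W$, $x_1\ne x_0$, and a path $\sigma$ from $x_0$ to $x_1$ inside $W$. Apply SLTP to $\beta\ast\sigma$ (neighborhood $U$) to get $V_1\ni x_1$, and to $\sigma$ (neighborhood $U$) to get $V_2 \ni x_1$; shrink so that $V_1\cap V_2 \subseteq W$. Set $V := \{x_0\} \cup (\text{a path-connected open nbhd of } x_1 \text{ inside } V_1\cap V_2)$ — more honestly, one takes $V$ to be an open neighborhood of $x_0$ small enough that every loop at $x_0$ in $V$ is, rel $\dot I$, of the form $\sigma\ast\gamma\ast\sigma^{-1}$ with $\gamma$ a loop at $x_1$ in $V_1\cap V_2$; the existence of such $V$ follows from local path connectedness together with shrinking $W$. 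Then for any loop $\delta$ at $x_0$ in $V$, write $\delta\simeq\sigma\ast\gamma\ast\sigma^{-1}$; we get $[\sigma\ast\gamma\ast\sigma^{-1}]\in i_\ast\pi_1(U,x_0)$ and $[(\beta\ast\sigma)\ast\gamma\ast(\beta\ast\sigma)^{-1}]\in i_\ast\pi_1(U,x_0)$, hence
\[
[\beta\ast\delta\ast\beta^{-1}] = [\beta]\,[\sigma\ast\gamma\ast\sigma^{-1}]\,[\beta]^{-1} \in i_\ast\pi_1(U,x_0)
\]
directly by the normality hypothesis $i_\ast\pi_1(U,x_0)\unlhd\pi_1(X,x_0)$ — so in fact the $(\beta\ast\sigma)$-conjugate is not even needed, and the proof collapses to: SLTP on $\sigma$ plus normality. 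This gives $[\beta]i_\ast\pi_1(V,x_0)[\beta]^{-1} \subseteq i_\ast\pi_1(U,x_0)$, i.e. $\beta$ is an SLT path. Combined with the non-loop case, every path starting at $x_0$ is SLT, so by Definition \ref{Def1} (and Definition \ref{Def2}) $X$ is SLT at $x_0$.

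The main obstacle I expect is the passage from ``loops of the form $\sigma\ast\gamma\ast\sigma^{-1}$ with $\gamma$ near $x_1$'' to ``all loops in a neighborhood of $x_0$'': this is precisely where one needs local path connectedness (to choose $\sigma$ inside arbitrarily small neighborhoods and to guarantee the decomposition of an arbitrary small loop) — if the theorem as stated does not assume local path connectedness, then either it is implicitly in force from the section's running hypotheses or the argument must instead pass through the characterization in Proposition \ref{Pro1} / Corollary \ref{Co1} relating SLT at $x_0$ to $i_\ast\pi_1(U,x_0)$ being open in $\pi_1^{qtop}$, using that $i_\ast\pi_1(V,x_0)$ for $V$ ranging over neighborhoods already generates the relevant open subgroups. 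A secondary (routine) point is bookkeeping the homotopies rel $\dot I$ when reassociating $\beta\ast\sigma\ast\gamma\ast\sigma^{-1}\ast\beta^{-1}$, which is standard.
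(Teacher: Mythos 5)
Your overall plan (split paths starting at $x_{0}$ into non-loops, handled by SLTP, and loops, handled using normality) is the right one, but the execution of the loop case has a genuine gap. The step you flag yourself -- choosing a neighborhood $V$ of $x_{0}$ so that \emph{every} loop $\delta$ at $x_{0}$ inside $V$ is homotopic rel $\dot{I}$ to $\sigma\ast\gamma\ast\sigma^{-1}$ with $\gamma$ a loop at $x_{1}$ inside $V_{1}\cap V_{2}$ -- is not a consequence of local path connectedness, and is false in general. Rewritten, it says that $\sigma^{-1}\ast\delta\ast\sigma$ is homotopic rel $\dot{I}$ to a loop in a prescribed small neighborhood of $x_{1}$, i.e.\ it is exactly an SLT-type condition on the path $\sigma^{-1}$, which starts at $x_{1}\neq x_{0}$; the SLTP hypothesis only controls paths starting at $x_{0}$, so nothing in the theorem provides this. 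Concretely, in the Hawaiian earring with $x_{0}$ the wild point, $x_{1}$ a nearby tame point and $V_{1}\cap V_{2}$ a small arc around $x_{1}$, every loop $\gamma$ at $x_{1}$ in that arc is nullhomotopic in $X$, so all classes $[\sigma\ast\gamma\ast\sigma^{-1}]$ are trivial, while every neighborhood of $x_{0}$ contains essential loops; no such $V$ exists. Moreover, local path connectedness is not among the hypotheses of the theorem, and your fallback route through Proposition \ref{Pro1}/Corollary \ref{Co1} also presupposes it and is not carried out.

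The repair is much simpler, and it is the paper's proof: the detour through an auxiliary point $x_{1}$ and the path $\sigma$ is unnecessary. Given a loop $\beta$ at $x_{0}$ and an open neighborhood $U$ of $x_{0}$, take $V=U$ in Definition \ref{Def1}. Any loop $\delta$ in $U$ based at $x_{0}$ already satisfies $[\delta]\in i_{\ast}\pi_{1}(U,x_{0})$, so by the normality hypothesis $[\beta\ast\delta\ast\beta^{-1}]=[\beta][\delta][\beta]^{-1}\in i_{\ast}\pi_{1}(U,x_{0})$, which means precisely that $\beta\ast\delta\ast\beta^{-1}$ is homotopic rel $\dot{I}$ to a loop in $U$ based at $x_{0}$. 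Thus every loop at $x_{0}$ is an SLT path using normality alone, and SLTP is needed only for the non-loop paths, which is exactly your first case. Replacing your $\sigma$-conjugate decomposition by this one-line observation closes the argument without any local path connectedness assumption.
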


\begin{proof}
It is enough to show that the property of SLT holds only for loops in $X$. Let $\alpha$ be a loop at $x_{0}$ and $U$ be an open neighborhood of $x_{0}$. In the definition of SLTP space at $x_{0}$ consider $V=U$.  By assumption, $i_{\ast}\pi_{1}(U,x_{0})$ is a normal subgroup. Then we have: $[\alpha\ast\beta\ast\alpha^{-1}]\in i_{\ast }\pi _{1}(U,x_{0})$ for every loop $\beta$ in $U$ based at $x_{0}$. Therefore, we imply that $X$ is SLT at $x_{0}$.
\end{proof}

%%%%%%%%%%%%%%%%%%%%%%%%%%%%%%%%%%%%%%%%%%%%%
Combining Corollary \ref{Co2} and Theorem \ref{Th3}, we obtain the following corollary.
\begin{corollary}\label{Co7}
If  $X$ is  an $ SLTP$ space at $ x_{0} $  and  for every open neighborhood  $U\subseteq X$,  $i_{\ast}\pi_{1}(U,x_{0})\unlhd \pi_{1}(X,x_{0})$, then $\pi_{1}^{qtop}(X,x_{0})$ is a topological group.
\end{corollary}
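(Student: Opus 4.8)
The plan is to deduce this directly by chaining the two indicated results, with no new argument required. First I would invoke Theorem \ref{Th3}: the hypotheses of the corollary are precisely that $X$ is $SLTP$ at $x_{0}$ and that $i_{\ast}\pi_{1}(U,x_{0})\unlhd \pi_{1}(X,x_{0})$ for every open neighborhood $U\subseteq X$, which is exactly what Theorem \ref{Th3} needs. Hence that theorem applies verbatim and gives that $X$ is an SLT space at $x_{0}$ (the content there being the transfer argument that uses normality of $i_{\ast}\pi_{1}(U,x_{0})$ to upgrade the $SLTP$ property, which only controls genuine non-loop paths, to the full SLT property by handling loops via $V=U$ and normality).

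Second, I would feed this conclusion into Corollary \ref{Co2}, which asserts that for a connected, locally path connected space that is SLT at $x_{0}$, both $\pi_{1}^{wh}(X,x_{0})$ and $\pi_{1}^{qtop}(X,x_{0})$ are topological groups. Extracting the statement about $\pi_{1}^{qtop}(X,x_{0})$ gives exactly the assertion of the corollary. So the proof is the single line: ``by Theorem \ref{Th3}, $X$ is SLT at $x_{0}$; by Corollary \ref{Co2}, $\pi_{1}^{qtop}(X,x_{0})$ is a topological group.''

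I do not expect any genuine obstacle here, since all the work has already been done upstream: Theorem \ref{Th3} supplies the SLT property, and Corollary \ref{Co2} supplies the topological group conclusion (itself resting on the continuity of inversion in $\pi_{1}^{qtop}(X,x_{0})$, Theorem \ref{Th1}, and \cite[Proposition 4.20]{BroU}). The only bookkeeping point worth flagging is that Corollary \ref{Co2} carries the standing assumptions that $X$ be connected and locally path connected; these should either be read as in force for this corollary as well or explicitly appended to its statement, but they are not an essential difficulty.
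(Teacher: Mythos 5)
Your proposal is exactly the paper's argument: the authors obtain this corollary by combining Theorem \ref{Th3} (to upgrade $SLTP$ at $x_{0}$ plus normality to SLT at $x_{0}$) with Corollary \ref{Co2}. Your remark that the connectedness and local path connectedness hypotheses of Corollary \ref{Co2} must be understood as in force is a fair bookkeeping observation, but it does not change the route.
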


%%%%%%%%%%%%%%%%%%%%%%%%%%%%%%%%%%%%%%%%%%%%
\begin{remark} \label{Rem1}
Since every subgroup of an abelian group is normal, if $X$ is SLTP at $x_{0}$  and $\pi_{1}(X,x_{0})$ is an abelian group, then by Theorem \ref{Th3}, $\pi_{1}^{qtop}(X,x_{0})$ and $\pi_{1}^{wh}(X,x_{0})$  are  topological groups.
\end{remark}

In the following, we give some examples to show that the converse of some of results mentioned in the paper do not hold, in general.

As shown in Corollary \ref{Co2}, if $X$ is a Peano  SLT space at $x_{0}$, then  $\pi_{1}^{qtop}(X,x_{0})$ is a topological group. The following example shows that the converse does not hold in general.

\begin{example} \label{Ex1}
Consider  $ X=HA\vee S^{1} $. Since $X$ is a semilocally small generated space, $\pi_{1}^{qtop}(X,x_{0})$ is a topological group (see \cite[Theorem 5.1]{TorabiP}). Note that for any point $x\in X$, it is easy to see that $X$ is not SLT at $x$.
\end{example}
As mentioned in Section 2, the equality  $ \pi_{1}^{wh}(X,x)=\pi_{1}^{qtop}(X,x) $ implies that
$\pi_{1}^{qtop}(X,x_{0})$ is a topological group. The following example shows that the converse does not hold in general.
\begin{example} \label{Ex2}
Consider  $ X=HA\vee S^{1} $ with the wedge point $x_0$. There is  no point $x$ in $X$ such that $ \pi_{1}^{wh}(X,x)=\pi_{1}^{qtop}(X,x) $. First, let $x$ be an arbitrary semilocally simply connected point. By \cite[Proposition 4.22]{BroU}, $\pi_{1}^{wh}(X,x)$ is discrete but it is not hard to see that $\pi_{1}^{qtop}(X,x)$ is not discrete (see \cite[Theorem 1.1]{CalMc}). Therefore, $ \pi_{1}^{wh}(X,x)\neq\pi_{1}^{qtop}(X,x) $ for any semilocally simply connected point $x$. But  $\pi_{1}^{qtop}(X,x)$ is a topological group because $ X=HA\vee S^{1} $ is semilocally small generated space (see \cite[Theorem 5.1]{TorabiP}).
Second, in the case of $x=x_{0}$, since $\pi_{1}^{qtop}(X,x_0)$ is a topological group, it is enough to show that $\pi_{1}^{wh}(X,x_0)$ is not a topological group. For this, Proposition \ref{Pro2} shows that $\pi_{1}^{wh}(X,x_0)$ is not a topological group because $ X=HA\vee S^{1} $ is not an $SLTL$ space at $x_{0}$ (consider a simple loop $\alpha$ inside $S^{1} $ in $X$ based at $x_{0}$).
\end{example}
In the next example, we show that none of the two conditions ``$SLTP$ at $x_{0}$" and ``$\pi_{1}^{qtop}(X,x_{0})$ is a topological group" implies the other..
	
\begin{example} \label{Ex3}
Consider $X=(HA,x)$, where $x$ is an arbitrary  semilocally simply connected point. It is shown in  \cite[Theorem 5.1]{TorabiP} that $HA$ is a small generated space and hence $\pi_{1}^{qtop}(HA,x)$  is an indiscrete topological group but $HA$ is not an $SLTP$ space at $x$ (consider any path from $x$ to the based point $x_{0}$). Also, for the non-semilocally simply connected point $x_{0}$, $X=(HE,x_{0})$ is an $SLTP$ space at $x_{0}$ but $\pi_{1}^{qtop}(HE,x_{0})$ is not a topological group (see \cite[Theorem 1]{FabelM}).
\end{example}	
In the following we give an example to show that none of  the two conditions ``$\pi_{1}^{qtop}(X,x)$ is a topological group" and ``$\pi_{1}^{wh}(X,x)$ is a topological group" implies the other.		
\begin{example} \label{Ex4}
Consider  $ X=HA\vee S^{1} $ with the wedge point $x_{0}$. As mentioned in Example \ref{Ex2}, $\pi_{1}^{qtop}(X,x_{0})$ is a topological group but $\pi_{1}^{wh}(X,x_{0})$ is not a topological group. To answer the other direction, consider $X=(HE,y)$, where $y$ is an arbitrary semilocally simply connected point. By \cite[Proposition 4.22]{BroU}, $\pi_{1}^{wh}(HE,y)$ is a topological group because $y$ is a semilocally simply connected point while $\pi_{1}^{qtop}(HE,y)$ is not a topological group (see \cite[Theorem 1]{FabelM}).
\end{example}
\vspace{5cm}
\begin{figure}
\begin{tikzpicture}%[line width=.9ex]
%\tikzstyle{every to}=[draw,dashed]
\node at (0,1)  (a) {$\ $};
\node at (-.9,1) {{SLTL at} $x_0$};
\node at (4,1) (b) {$\ $};
\node at (5,.9) {{SSLT at} $x_0$};
\node at (4,5) (c) {$\ $};
\node at (4.8,5.2) {{SLT at} $x_0$};
\node at (0,5) (d) {$\ $};
\node at (-.9,5.1){{SLTP at} $x_0$};
\draw[<->,dashed, double] (a) -- (d) node[midway,right] {\scriptsize (3)};
\draw [<->,dashed,  double,line width=.001ex] (a) --(d) node[midway] {$\boldsymbol\times$};
\draw[<->,dashed, double] (a) -- (b) node[midway,above] {\scriptsize (11)};
\draw [<->,dashed,  double,line width=.001ex] (a) --(b) node[midway] {$\boldsymbol\times$};
\draw[->,  double] (c) -- (b) node[midway,right] {\scriptsize (7)};
\draw[->,  double] (c) -- (d) node[midway,above] {\scriptsize (9)};
\node at (-3,-2)  (a1) {{} };
\node at (-4.5,-2)  {$\pi_1^{wh}(X,x_0)$\ \tiny{Top\ Grp}};
\node at (1,-2) (b1) { $\ $};
\node at (3.2,-2) {$\pi_1^s(X,x_0)=\pi_1^{sg}(X,x_0)$};
\node at (1,2) (c1) { $\ $};
\node at (2.5,1.9)  {\ \ \ \ \ \ \ \ \ \ \ \ \ \ \   $\pi_1^{qtop}(X,x_0)\!=\!\pi_1^{wh}(X,x_0)$};
\node at (-3,2) (d1) {{}};
\node at (-4.5,2)  {$\pi_1^{qtop}(X,x_0)$\ \tiny{Top\ Grp} };
\draw[<->,dashed,  double] (a1) -- (d1) node[midway,left] {\scriptsize (2)};
\draw [<->,dashed,  double,line width=.001ex] (a1) --(d1) node[midway]{$\boldsymbol\times$};
\draw[<->,dashed,  double] (a1) -- (b1) node[midway,below] {\scriptsize (12)};
\draw [<->,dashed,  double,line width=.001ex] (a1) --(b1) node[midway] {$\boldsymbol\times$};
\draw[->,  double] (c1) -- (b1)  node at (.7,.2)  {\scriptsize (6)};
\draw[->,  double] (c1) -- (d1) node[midway,above] {\scriptsize (10)};
\draw[<->,dashed,  double] (b1) -- (d1) node at (-.07,-.4) {\scriptsize (13)};
\draw  [<-, double,line width=.1ex] (a) to [in=180 , in=-160]  (c)   node at (1.4,3.4) {\scriptsize (14)};
\draw [<->,dashed,  double,line width=.001ex] (b1) --(d1) node at (-.55,-.4) {$\boldsymbol\times$};
\draw [<->,  double] (c1) --(c) node[midway,right] {\scriptsize (5)};
\draw [<->,  double] (b1) --(b) node[midway,right] {\scriptsize (8)};
\draw [<->,  double] (a1) --(a) node[midway,left] {\scriptsize (4)};
\draw [<->,dashed,  double] (d1) --(d) node[midway,left] {\scriptsize (1)};
%\draw [<->, double, line width=.1ex] (a) to -- (c)    node at (2,-.4) {\scriptsize (14)};
%\draw[->,  double] (c) --(b1) node[in=45 , out=-100] {\scriptsize (14)}[midway,left]
\draw [<->,dashed,  double,line width=.001ex] (d1) --(d) node[midway] {$\boldsymbol\times$};
\end{tikzpicture}
\[
Fig. 1.\ \ Cubic\ Diagram
\]
\end{figure}

%\begin{figure} [h!] \label{fig1}
%\centering
%\includegraphics[scale=.4]{1.pdf}
%\caption{Cubic Diagram}
%\end{figure}

In the above cubic diagram of implications (see Fig.1) we gather all main results of the paper together. In what follows, we give some examples to show that the reverse of some of implications in Fig.1 do not hold, in general.

According to the enumeration of the implications in the above diagram, for each arrow a reference or a counterexample is given. The label  (1, $\nLeftarrow$ ”) means that the converse of this implication is in general not true.\\
\ \
\\
(1, $\nRightarrow$ and $\nLeftarrow$): See Example \ref{Ex3}.\\
(2, $\nRightarrow$ and $\nLeftarrow$): See Example \ref{Ex4}.\\
(3, $\nRightarrow$): The $HE$ is SLTP space at the base point $x_{0}$ but it is not an SLTL space at $x_{0}$.\\
(3, $\nLeftarrow$): The $HE$ is  an SLTL at any point except the base point but it is not an SLTP at these points.\\
(4, $\Longleftrightarrow$): This is the statement of Proposition \ref{Pro2}.\\
(5, $\Longleftrightarrow$): This is the statement of Theorem \ref{Th1}.\\
(6, $\Rightarrow$): Follows from Theorem \ref{Th1} and Proposition \ref{Pro4}.\\
(6, $\nLeftarrow$):  Since $HE$ does not have any small loop, $\pi_{1}^{sg}(HE,x_{0}) =1$. On the other hand, by the definitions of $\pi_{1}^{s}$ and $\pi_{1}^{sg}$ we have $\pi_{1}^{s}(HE,x_{0})\leq\pi_{1}^{sg}(HE,x_{0})$. Hence $\pi_{1}^{s}(HE,x_{0})=\pi_{1}^{sg}(HE,x_{0})=1$ while $HE$ is not an SLT at the base point $x_{0}$ (see \cite[Proposition 4.10]{BroU}). Therefore, by Theorem \ref{Th1} $ \pi_{1}^{wh}(HE,x_{0})\neq\pi_{1}^{qtop}(HE,x_{0})$\\
(7, $\Rightarrow$): Follows from Definitions \ref{Def2} and  \ref{Def6}.\\
(7, $\nLeftarrow$):  See (6, $\nLeftarrow$).\\
(8, $\Longleftrightarrow$): This is the statement of Theorem \ref{Th2}.\\
(9, $\Leftarrow$): Follows from Definitions \ref{Def2} and \ref{Def7}.\\
(9, $\nRightarrow$):  The space $ X=HA\vee S^{1} $ is $ SLTP$ at the wedge point while it is not SLT at any  points (see Example \ref{Ex2} and Theorem \ref{Th1}).\\
(10, $\Leftarrow$): This is the statement of Corollary \ref{Co2}.\\
(10, $\nRightarrow$):  See Example \ref{Ex1} and Theorem \ref{Th1}. \\
(11, $\nRightarrow$): $ X=HA $ is an SLTL space at any  semilocally simply connected point $y$, but it is not SSLT at $y$ because  $ \pi_{1}^{s}(HA,y)$ is trivial and $ \pi_{1}^{sg}(HA,y)=\pi_{1}(HA,y)$ (see Theorem \ref{Th2}).\\
(11, $\nLeftarrow$): Use the example of (6, $\nLeftarrow$).\\
(12, $\nRightarrow$): See 11.\\
(12, $\nLeftarrow$):  $ \pi_{1}^{s}(HA,y)$ is tivial and $ \pi_{1}^{sg}(HA,y)=\pi_{1}(HA,y)$ (see \cite[Remark 2.11]{TorabiP}) while $\pi_{1}^{wh}(HA,y)$ is not a topological group (see Proposition \ref{Pro2}), where $y$ is a semilocally simply connected point.\\
(13, $\nRightarrow$): $\pi_{1}^{qtop}(HA,y)$  is an indiscrete topological group while  $ \pi_{1}^{s}(HA,y)$ is tivial and $ \pi_{1}^{sg}(HA,y)=\pi_{1}(HA,y) $, where $y$ is a semilocally simply connected point.\\
(13, $\nLeftarrow$):  $ \pi_{1}^{s}(HE,x_{0})=\pi_{1}^{sg}(HE,x_{0}) =1$ because $HE$ does not have small loop while $ \pi_{1}^{qtop}(HE,x_{0})$ is not a topological group. \\
(14, $\Leftarrow$): Follows from Defintions \ref{Def2} and \ref{Def4}.\\
(14, $\nRightarrow$): $HE$ is an SLTL space at any semilocally simply connected point $y$ while it is not SLT at $y$.\\

\section*{Acknowledgments}
This research was supported by a grant from Ferdowsi University of Mashhad-Graduate Studies (No. 38590).

\section*{Reference}

\bibliography{mybibfile}

%% \bibitem must have the following form:
%%   \bibitem{key}...
%%

% \bibitem{}

% \end{thebibliography}

\end{document}